\theoremstyle{plain}
\newtheorem{prop}{Proposition}
\newtheorem{lemma}[prop]{Lemma}
\newtheorem*{cone*}{Cone covering property} 
\newtheorem{theorem}[prop]{Theorem}
\newtheorem*{prop*}{Proposition}
\newtheorem*{lemma*}{Lemma}
\newtheorem{cor}[prop]{Corollary}
\newtheorem*{cor*}{Corollary}
\newtheorem*{theorem*}{Theorem}
\newtheorem{mainresult}{Main result}
\theoremstyle{definition} 
\newtheorem*{def*}{Definition}
\theoremstyle{remark}
\newtheorem*{remark}{Remark}
\newtheorem*{remarks}{Remarks}
\newtheorem*{acks}{Acknowledgements}
\newtheorem*{example}{Example}
\newcommand{\R}{\mathbb{R}}
\newcommand{\Z}{\mathbb{Z}}
\newcommand{\C}{\mathbb{C}}
\newcommand{\E}{\mathbb{E}}
\newcommand{\D}{\,\textup{d}}
\newcommand{\Dt}{\,\frac{\textup{d} t}{t}}
\newcommand{\Ds}{\,\frac{\textup{d} s}{s}}
\newcommand{\Dd}{\mathscr{D}}
\newcommand{\Ll}{\mathscr{L}}
\newcommand{\Aa}{\mathscr{A}}
\newcommand{\la}{\langle}
\newcommand{\ra}{\rangle}
\newcommand{\supp}{\textup{supp}\,}
\newcommand{\ran}{\textsf{R}}
\newcommand{\dom}{\textsf{D}}
\title[Vector-valued tent spaces and Hardy spaces]{On vector-valued tent spaces and Hardy spaces associated with non-negative self-adjoint operators}
\author{Mikko Kemppainen}
\address{Departamento de Matem\'{a}ticas, Universidad Aut\'{o}noma de Madrid, 28049 Madrid, Spain}
\curraddr{Department of Mathematics and Statistics, University of Helsinki, FI-00014 Helsinki, Finland}
\email{mikko.k.kemppainen@helsinki.fi}
\date{\today}
\begin{document}

\begin{abstract}
  In this paper we study Hardy spaces associated with non-negative self-adjoint operators and develop their vector-valued theory. The complex interpolation scales of vector-valued tent spaces and Hardy spaces are extended to the endpoint $p=1$. The holomorphic functional calculus of $L$ is also shown to be bounded on the associated Hardy space $H^1_L(X)$. These results, along with the atomic decomposition for the aforementioned space, rely on boundedness of certain integral operators on the tent space $T^1(X)$.
\end{abstract}

\subjclass[2010]{42B35 (Primary); 46E40 (Secondary)}
\keywords{interpolation, functional calculus, atomic decomposition, integral operators, cone covering property, $\gamma$-radonifying operators, UMD space}

\maketitle


\section{Introduction}

The theory of Hardy spaces associated with operators has been extensively studied in the recent years. 
Indeed, the cases of elliptic operators on $\R^n$ \cite{HOFMANNDIV,SECONDORDER}, 
non-negative self-adjoint operators on doubling metric measure spaces \cite{HOFMANNHARDY} and
Hodge--Dirac operators on Riemannian manifolds (with doubling volume measure) \cite{AMR}
are all well-understood by now.

In the abovementioned cases, the Hardy spaces are defined in terms of conical square functions, which has the benefit
of allowing a direct connection with \emph{tent spaces}. These were first introduced 
by Coifman, Meyer and Stein in \cite{CMSTENTSPACES} and have
since become a central tool in Harmonic Analysis. Their theory extends without much difficulty to
doubling metric measure spaces (see \cite{AMENTA,RUSS}).

The aim of this paper is to study such Hardy spaces for functions that take their values in an infinite dimensional
Banach space. This is not a completely new development; the theory of vector-valued Hardy spaces associated with 
bisectorial operators on $\R^n$ was initiated by Hyt{\"o}nen, van Neerven and Portal in \cite{HVNPCONICAL}, 
which is the main inspiration for this article.
However, their theory covers only the range $1 < p < \infty$, mainly because not all of the classical scalar-valued 
tent space techniques carry over to vector-valued setting. A new method, suitable for vector-valued tent spaces,
was introduced by the author in \cite{TENTSPACES}, which allowed to extend the theory to $p=1$.
In this article we study the case of vector-valued Hardy spaces associated with non-negative self-adjoint operators
on certain doubling metric measure spaces and develop the corresponding theory of tent spaces.

The main result concerning interpolation (Theorem \ref{interpolationendpoint} and Corollary 
\ref{tentmain1}) extends Theorem 4.7 from \cite{HVNPCONICAL} to the lower endpoint.

\begin{mainresult}
  The complex interpolation scale of vector-valued tent spaces $T^p(X)$ extends to $p=1$.
\end{mainresult}

Actually, also the other endpoint $T^\infty (X)$ is included in the interpolation scale as a consequence of the duality
$T^1(X)^* \simeq T^\infty (X^*)$ (Theorem \ref{endpointduality}, cf. \cite[Theorem 14]{TENTSPACES}). The `classical' proof of the
duality \cite[Theorem 1(b)]{CMSTENTSPACES} becomes available in the vector-valued setting after a more direct
definition of tent spaces which does not rely on completions (see Section \ref{tentspaces} and Appendix A).

Instead of the `embedding method' from \cite{HARBOURE} and \cite{TENTSPACES} (which for $p=1$ and $p=\infty$ 
is of a strictly Euclidean nature),
the proof of Main result 1 is based on
a geometric assumption on the underlying space, namely the \emph{cone covering property}. It is meant as an abstraction
of the proof technique rather than a genuine geometric property, and the framework of metric measure spaces is chosen
primarily to highlight the flexibility of this method. In \cite{TENTSPACES} it was proven for $\R^n$ and in 
\cite{NONNEGATIVE} it is shown to hold, more generally, on
complete (connected) Riemannian manifolds of non-negative sectional curvature.

The communication between tent spaces and Hardy spaces happens by means of integral operators.
In the vector-valued setting the boundedness of integral operators on tent spaces
relies on the change of aperture \cite[Theorems 4.3 and 5.6]{HVNPCONICAL}.
We obtain a change of aperture inequality on $T^1(X)$ from the \emph{atomic decomposition}, the proof of which also relies on 
the cone covering property, 
and extend the integral operators to $T^1(X)$ following closely the proof from \cite{HVNPCONICAL}.

We then arrive at the second main result (Theorems \ref{main2} and \ref{main1}), 
which extends Theorem 7.10 and Corollary 7.2 from \cite{HVNPCONICAL} to the
endpoint $p=1$:

\begin{mainresult}
  The complex interpolation scale of vector-valued Hardy spaces $H^p_L(X)$ extends to $p=1$.
  Moreover, $L$ has a bounded $H^\infty$-functional calculus on $H^1_L(X)$. 
\end{mainresult}

It is well-understood that the tent space atomic decomposition can be turned into
atomic or molecular decomposition of the Hardy space (see Theorem \ref{Hardyatomicdec}):

\begin{mainresult}
  Functions in a dense subspace of $H^1_L(X)$ admit decompositions into atoms.
\end{mainresult}

As a corollary, the `square function Hardy space' $H^1_\Delta (X)$ 
associated with the (non-negative) Laplacian $\Delta$ on $\R^n$ 
coincides with the classical `atomic Hardy space'. The presented framework also covers the case when $L$ is the
Laplace--Beltrami operator on a complete (connected) Riemannian manifold with non-negative sectional curvature.

The vector-valued tent space theory makes use of pointwise estimates, which imposes two limitations to the current understanding. 
Firstly, in order to have atomic decompositions and interpolation for tent spaces we rely
on the cone covering property of the underlying metric space.  
Secondly, for non-self-adjoint operators, it is by no means clear how to obtain molecular decompositions for the associated 
Hardy spaces. The difficulty arises in the attempt to interpret the molecular decay condition by means of
integral operators on tent spaces.

\begin{acks}
  The author gratefully acknowledges financial support from the V\"ais\"al\"a Foundation and 
  from the Academy of Finland through the project 
  \emph{Stochastic and harmonic analysis: interactions and applications} (133264). Many thanks to the anonymous referee for carefully reading
  the manuscript and offering suggestions for improvement.
\end{acks}

\section{Preliminaries}

\subsection*{Notation}

Random variables are taken to be defined on a fixed probability space whose 
expectation is denoted by $\E$.
Given a Banach space $X$ the duality pairing between $\xi\in X$ and $\xi^*\in X^*$ is written as $\la \xi , \xi^* \ra$.
By $\alpha \lesssim_\varepsilon \beta$ it is meant that there exists a constant $C_\varepsilon$ (depending on a parameter $\varepsilon$) such that
$\alpha \leq C_\varepsilon \beta$. Quantities $\alpha$ and $\beta$ are comparable, $\alpha \eqsim \beta$, if
$\alpha \lesssim \beta$ and $\beta \lesssim \alpha$.

\subsection*{Stochastic integration and $\gamma$-radonifying operators}
We first recall some facts about stochastic integration of functions with values in a (complex) Banach space
(see \cite{JVNSTOCHINT} for details).

Let $(\Omega,\nu)$ be a $\sigma$-finite measure space and assume that a \emph{random measure} $W$ associates
to each set $A\subset \Omega$ of finite measure, a Gaussian random variable $W(A)$ so that
\begin{itemize}
  \item $\E W(A)^2 = \nu (A)$,
  \item if $A$ and $A'$ are disjoint sets, then $W(A)$ and $W(A')$ are independent and
  $W(A\cup A') = W(A) + W(A')$.
\end{itemize}
The \emph{stochastic integral} with respect to $W$ is defined by linearly extending
$\int_\Omega 1_A \D W = W(A)$ to simple functions and then by density to whole of $L^2(\Omega)$. Observe, that the
`Itô isometry'
\begin{equation*}
  \E \Big| \int_\Omega u \D W \Big|^2 = \int_\Omega |u|^2 \D\nu
\end{equation*}
holds for $u\in L^2(\Omega)$.
Moreover, if $X$ is
a Banach space, we can take the tensor extension to $L^2(\Omega)\otimes X$ by defining
\begin{equation*}
  \int_\Omega u\otimes \xi \D W = \int_\Omega u \D W \otimes \xi ,
\end{equation*}
for $u\in L^2(\Omega)$ and $\xi\in X$. Two crucial properties of the vector-valued stochastic integral are
\begin{itemize}
\item
\emph{Covariance domination:} If two functions $u,v\in L^2(\Omega) \otimes X$ satisfy
\begin{equation*}
  \int_\Omega |\la v(\cdot ),\xi^* \ra |^2 \D\nu \lesssim \int_\Omega |\la u(\cdot ),\xi^* \ra |^2 \D\nu
\end{equation*}
for all $\xi^*\in X^*$, then
\begin{equation*}
  \E \Big\| \int_\Omega v \D W \Big\|^2 \lesssim \E \Big\| \int_\Omega u \D W \Big\|^2 .
\end{equation*}
\item \emph{Khintchine--Kahane inequality:} For all $1\leq p,q < \infty$ and every $u\in L^2(\Omega) \otimes X$ we have
\begin{equation*}
  \Big( \E \Big\| \int_\Omega u \D W \Big\|^p \Big)^{1/p} 
  \eqsim \Big( \E \Big\| \int_\Omega u \D W \Big\|^q \Big)^{1/q} .
\end{equation*}
\end{itemize}

Recall that a Banach space $X$ is said to have \emph{type} $r\in [1,2]$ if
for any (finite) collection $\{ \xi_k \}$ of vectors in $X$ we have
\begin{equation*}
  \Big( \E \Big\| \sum_k \varepsilon_k \xi_k \Big\|^2 \Big)^{1/2} \lesssim
  \Big( \sum_k \| \xi_k \|^r \Big)^{1/r} ,
\end{equation*}
where the \emph{Rademacher variables} $\varepsilon_k$ are independent and attain values $\pm 1$ with equal probability $1/2$.
In terms of stochastic integrals, if $X$ has type $r$, then
\begin{equation*}
  \Big( \E \Big\| \sum_k \int_\Omega u_k \D W \Big\|^2 \Big)^{1/2} \lesssim
  \Big( \sum_k \E \Big\| \int_\Omega u_k \D W \Big\|^r \Big)^{1/r} ,
\end{equation*}
whenever $u_k$ are disjointly supported functions in $L^2(\Omega) \otimes X$. Indeed, the random variables
$\int_\Omega u_k \D W$ are independent and symmetric, and therefore identically distributed with
$\varepsilon'_k \int_\Omega u_k \D W$ when $(\varepsilon'_k)$ is an independent sequence of Rademacher variables.
Using Khintchine--Kahane inequality and type $r$ of $X$ we may then infer that
\begin{equation*}
  \Big( \E \Big\| \sum_k \int_\Omega u_k \D W \Big\|^2 \Big)^{1/2}
  \eqsim \Big( \E \E' \Big\| \sum_k \varepsilon'_k \int_\Omega u_k \D W \Big\|^r \Big)^{1/r}
  \lesssim \Big( \sum_k \E \Big\| \int_\Omega u_k \D W \Big\|^r \Big)^{1/r} . 
\end{equation*}

The space of `stochastically integrable' functions is not, in general, complete, but can be described in terms of
$\gamma$-radonifying operators (see \cite{GAMMARAD} for a survey):

\begin{def*}
A densely defined linear operator $u$ from $L^2(\Omega)$ to $X$ is said to be \emph{$\gamma$-radonifying} if it can 
be approximated by finite rank operators in the norm
\begin{equation*}
  \| u \|_{\gamma (L^2(\Omega),X)} = \sup \Big( \E \Big\| \sum_k \gamma_k uh_k \Big\|^2 \Big)^{1/2} ,
\end{equation*}
where the supremum is taken over finite orthonormal systems $\{ h_k \}$ in the domain of $u$.
Here the $\gamma_k$ are independent standard Gaussian random variables.
\end{def*}

\begin{remarks}\mbox{}
  \begin{itemize}
  \item Observe that if $\| u \|_{\gamma (L^2(\Omega),X)} < \infty$, then $u$ extends to a bounded operator.
  
  \item 
  If $X$ does not contain an isomorphic 
  copy of $c_0$, then every operator
  $u$ with $\| u \|_{\gamma (L^2(\Omega ),X)} < \infty$ can be approximated by finite rank operators and is thus
  $\gamma$-radonifying \cite[Theorem 4.2]{GAMMARAD}. 
  
  \item The space $\gamma (L^2(\Omega),X)$ of $\gamma$-radonifying operators is complete.
  \end{itemize}
\end{remarks}

Now, $\gamma$-norms of finite rank operators correspond to stochastic integrals of functions in the sense that
every $u = \sum_k u_k \otimes \xi_k \in L^2(\Omega) \otimes X$ defines an operator
\begin{equation*}
  L^2(\Omega) \to X : \quad h\mapsto \sum_k \Big( \int_{\Omega} u_k h \D\nu \Big) \xi_k
\end{equation*}
(also denoted by $u$) for which
\begin{equation*}
  \| u \|_{\gamma (L^2(\Omega),X)} = \Big( \E \Big\| \int_{\Omega} u \D W \Big\|^2 \Big)^{1/2} .
\end{equation*}

\subsection*{The UMD-property}

Most of our results rely on the assumption that $X$ has \emph{UMD}, which by 
definition is a requirement for unconditionality of
martingale differences (see \cite{BURKHOLDERSINGULAR}). It can also be described in terms of various square functions, such as
the Littlewood--Paley square function: $X$ has UMD if and only if for any $1 < p < \infty$ we have
\begin{equation*}
  \E \Big\| \sum_{k\in\Z} \varepsilon_k P_k f \Big\|_{L^p(\R^n ; X)} \eqsim \| f \|_{L^p(\R^n ; X)} ,
\end{equation*}
where $\widehat{P_kf}(\xi) = 1_{A_k}(\xi)\widehat{f}(\xi)$ defines a frequency cut-off to the cubical annulus 
$A_k = \{ \xi\in\R^n : 2^k \leq |\xi_j| < 2^{k+1} \}$. A one-dimensional version of this result first appeared in
\cite{BOURGAINDUALITY} and an extension to higher dimensions can be found in \cite{ZIMMER} 
(see also \cite[Section 4]{WEISBOOK}).
As a consequence
one has the Mihlin multiplier theorem (see \cite[Proposition 3]{ZIMMER} or \cite[4.6 Theorem]{WEISBOOK}) which can be 
applied in showing that
the (non-negative) Laplacian $\Delta$ has
a bounded $H^\infty$-functional calculus on $L^p(\R^n ; X)$, that is, for every
bounded holomorphic function $\phi$ in a sector 
$\{ \zeta\in\C\setminus \{ 0 \} : | \arg \zeta | < \sigma \}$ with $\sigma > 0$,
the Fourier multiplier
\begin{equation*}
  \widehat{\phi (\Delta) f}(\xi) = \phi (|\xi |^2)\widehat{f}(\xi) ,
\end{equation*}
defines a bounded operator $\phi (\Delta)$ on $L^p(\R^n ; X)$. 
On the other hand, boundedness of such functional calculus for the Laplacian on $L^p(\R^n ;X)$ is sufficient for
$X$ to have UMD, as was proven in \cite{GUERRE} by considering the imaginary powers 
arising from $\phi (\zeta) = \zeta^{is}$, with $s\in\R$.
The Mihlin multiplier theorem was extended to the atomic Hardy space
$H^1_{at}(\R^n ; X)$ in \cite{HYTONENEMB} (see page \pageref{hardydef} for the definition).
It should also be mentioned that, 
more generally, any generator of a positive
contraction semigroup on an $L^p$-space has a bounded $H^\infty$-functional calculus on $L^p(X)$ when $X$ has UMD (see
\cite{HIEBER}). The general theory of $H^\infty$-functional calculus for sectorial operators was developed
by McIntosh and collaborators in \cite{MCINTOSH} and \cite{YAGI}.

Our need for UMD is two-fold. In the main example (on page \pageref{example}) we follow 
\cite[Theorem 8.2]{HVNPCONICAL} and make use of vector-valued Calderón--Zygmund theory in studying
$L^p$-boundedness of the conical square function
\begin{equation*}
  Sf(x) = \Big( \E \Big\| \iint_{|x-y|<t} (t^2\Delta)^N e^{-t^2\Delta}f(y) \D W(y,t) \Big\|^2 \Big)^{1/2} ,
\end{equation*}
where $W$ is a random measure arising from $\frac{\textup{d}y \D t}{t^{n+1}}$. In accordance with the discussion above,
this contains the essence of UMD. In addition, we rely on UMD in the form of a vector-valued Stein's inequality,
which is central to our proof of the basic tent space properties (see Proposition \ref{refl} and the references therein).

\section{Tent spaces}
\label{tentspaces}

Let $(M,d,\mu)$ be a complete doubling metric measure space.
This means that there exist a 
number $n>0$ such that for every ball $B\subset M$,
\begin{equation*}
  \mu (\alpha B) \lesssim \alpha^n \mu(B) ,
\end{equation*}
whenever $\alpha \geq 1$. Furthermore, for all $x,y\in M$ and all $r>0$ we have
\begin{equation*}
  \mu (B(x,r)) \lesssim \Big( 1 + \frac{d(x,y)}{r}\Big)^{n_0} \mu (B(y,r)) ,
\end{equation*}
where $0\leq n_0 \leq n$. We fix $n$ and $n_0$ to be smallest such numbers. In what follows, we write
$V(y,t) = \mu (B(y,t))$. By $r_B$ we refer to the radius of a ball $B$.

\subsection*{Definition of and basic properties tent spaces}
We equip the upper half-space $M^+ = M \times (0,\infty )$ with a random measure $W$
arising from $\frac{\textup{d}\mu (y)\,\textup{d}t}{tV(y,t)}$ and write
$\Gamma_\alpha (x) = \{ (y,t)\in M^+ : d(x,y) < \alpha t \}$ for the cone of aperture $\alpha \geq 1$ at $x\in M$.
Note that functions in scalar-valued tent spaces,\footnote{Familiarity with the basics of scalar-valued tent spaces is assumed; see \cite{AMENTA,CMSTENTSPACES}.} being locally square-integrable, can be seen to act as linear functionals on
the space $L_c^2(M^+)$ of compactly supported square-integrable functions on $M^+$.
It is therefore natural to define vector-valued tent spaces to consist of linear operators
from $L^2_c(M^+)$ to $X$.
We use $1_K$ synonymously for the indicator function and the corresponding projection operator.
Integration on $M^+$ is denoted by the double integral $\iint$ and
integral averages on $M$ are abbreviated by 
$\fint_B \textup{d}\mu := \mu (B)^{-1} \int_B \textup{d}\mu$.
Let $X$ be a (complex) Banach space.

\begin{def*}
  Let $1\leq p < \infty$ and $\alpha \geq 1$. The tent space $T_\alpha^p(X)$ consists of linear operators
  $u: L^2_c(M^+) \to X$ for which
  \begin{itemize}
    \item the map $x\mapsto u 1_{\Gamma_\alpha (x)}$ is strongly measurable from $M$ to $\gamma (L^2(M^+),X)$,
    \item $\| u \|_{T_\alpha^p(X)} = \| \Aa_\alpha u \|_{L^p} < \infty$, where
    $\Aa_\alpha u(x) = \| u 1_{\Gamma_\alpha (x)} \|_{\gamma (L^2(M^+),X)}$. 
  \end{itemize}
\end{def*}

\begin{remarks}\mbox{}
\begin{itemize}
\item For every $1\leq p < \infty$ and $\alpha \geq 1$, the tent space $T_\alpha^p(X)$ is complete and 
      contains $L^2_c(M^+) \otimes X$ as a dense subspace 
      (see Appendix A). From Propositions \ref{refl} and 
      \ref{changeofaperture} it follows that, under our typical assumptions on $X$ and $M$,
      the tent spaces with different apertures $\alpha$ coincide for any fixed $1\leq p < \infty$.

\item Let $1\leq p < \infty$. Note that if $u \in T^p$ and $\xi\in X$, then
\begin{equation*}
  \Aa (u\otimes \xi )(x) = \Big( \E \Big\| \iint_{\Gamma (x)} u \D W \otimes \xi \Big\|^2 \Big)^{1/2}
  = \Big( \iint_{\Gamma (x)} |u(y,t)|^2 \, \frac{\textup{d}\mu (y) \D t}{t V(y,t)} \Big)^{1/2} \| \xi \| ,
\end{equation*}
and so $T^p \otimes X$ is a dense subspace of $T^p(X)$.
Here and in what follows, by omitting the parameter 
$\alpha$ we refer to $\alpha = 1$.

\item The most fundamental difference to the scalar-valued tent spaces is that, unless $X$ is a Hilbert space,
we no longer have $T^2(X) = L^2(M^+, \frac{\textup{d}\mu \D t}{t} ; X)$.
\end{itemize}
\end{remarks}

For $x\in M$ and $r > 0$, let $\Gamma^r (x) = \{ (y,t)\in \Gamma (x) : t < r \}$ denote a truncated cone.

\begin{def*}
  The tent space $T^\infty (X)$ consists of linear operators $v: L^2_c(M^+) \to X$ for which
  \begin{itemize}
    \item the map $x\mapsto v 1_{\Gamma^r (x)}$ is strongly measurable from $M$ to $\gamma (L^2(M^+),X)$ for every $r > 0$,
    \item the norm
    \begin{equation*}
      \| v \|_{T^\infty (X)} = \sup_B \Big( \fint_B \Aa^{r_B} v(x)^2 \D\mu (x) \Big)^{1/2} < \infty ,
    \end{equation*}
    where $\Aa^r v(x) = \| v 1_{\Gamma^r (x)} \|_{\gamma (L^2(M^+),X)}$ and the supremum is taken over all balls $B\subset M$. 
  \end{itemize}
\end{def*}

\begin{remark}
  For scalar-valued functions the $T^\infty$-norm is comparable with a more familiar expression. Indeed, if
  $v\in T^\infty$ and $\xi \in X$, then
  \begin{align*}
    \| v \otimes \xi \|_{T^\infty (X)}
    &= \sup_B \Big( \fint_B \iint_{\Gamma^{r_B}(x)} |v(y,t)|^2 \, \frac{\textup{d}\mu (y) \D t}{tV(y,t)} \D\mu (x) \Big)^{1/2} 
    \| \xi \| \\
    &\eqsim \sup_B \Big( \frac{1}{\mu (B)} \iint_{T(B)} |v(y,t)|^2 \, \frac{\textup{d}\mu (y) \D t}{t}\Big)^{1/2}
    \| \xi \| ,
  \end{align*}
  where we made use of the observation that for each ball $B\subset M$ and every $x\in B$ 
  we have $\Gamma^{r_B}(x) \subset T(3B) := M^+ \setminus \bigcup_{x\not\in 3B} \Gamma (x)$. 
  Consequently, $T^\infty \otimes X$ is a subspace of $T^\infty (X)$ (but not dense).
\end{remark}

The following proposition presents three basic properties of tent spaces in the case $1 < p < \infty$. An efficient way to
handle this range by embedding into vector-valued $L^p$-spaces was discovered in \cite{HARBOURE}.

\begin{prop}
\label{refl}
  Let $1 < p < \infty$ and suppose that $X$ has UMD.
  \begin{itemize}
    \item \emph{Change of aperture:} for every
    $u\in L^2_c(M^+)\otimes X$ we have 
    $\| \Aa_\alpha u \|_{L^p} \lesssim_p \alpha^n \| \Aa u \|_{L^p}$ whenever $\alpha \geq 1$.
    \item \emph{Duality:} the isomorphism $T^p(X)^* \simeq T^{p'}(X^*)$ is realized by the pairing
    \begin{equation*}
      \la u , v \ra = \iint_{M^+} \la u(y,t) , v(y,t) \ra \, \frac{\textup{d}\mu (y) \D t}{t} , 
      \quad u\in T^p \otimes X , \quad v\in T^{p'} \otimes X^*,
    \end{equation*}
    for which $| \la u , v \ra | \lesssim \| u \|_{T^p(X)} \| v \|_{T^{p'}(X^*)}$.
    \item \emph{Complex interpolation:} we have $[T^{p_0}(X) , T^{p_1}(X)]_\theta = T^p(X)$, where $1 < p_0 \leq p_1 < \infty$ and $1/p = (1-\theta)/p_0 + \theta /p_1$.
  \end{itemize}
  \begin{proof}
  We content ourselves with a sketch of the proof. For more details, see \cite{HVNPCONICAL,TENTSPACES} and the references 
  therein.
  The isometry
  \begin{equation*}
    J_\alpha : T^p_\alpha (X) \hookrightarrow L^p(M;\gamma (L^2(M^+),X)) , \quad J_\alpha u(x) = u1_{\Gamma_\alpha (x)}
  \end{equation*}
  embeds $T^p_\alpha (X)$ as a complemented subspace of $L^p(M;\gamma (L^2(M^+),X))$. The associated projection is
  given by
  \begin{equation*}
    N_\alpha F(x;y,t) = 1_{B(x,\alpha t)} (x) \fint_{B(y,t)} F(z;y,t) \D\mu (z) , \quad F\in L^p(M) \otimes L^2(M^+) \otimes X .
  \end{equation*}
  Note that $N_\alpha F(x;y,t) = A_{B(y,t)}^\alpha F_{y,t}(x)$, where $F_{y,t}$ stands for the function
  $M\to X : x\to F(x;y,t)$ and
  \begin{equation*}
    A_B^\alpha f = 1_{\alpha B} \fint_B f\D\mu
  \end{equation*}
  is a localized averaging operator associated with a ball $B\subset M$. Consequently,
  \begin{equation*}
    \| N_\alpha F \|_{L^p(M;\gamma (L^2(M^+),X))} 
    \lesssim \gamma (A_B^\alpha : B\subset M) \| F \|_{L^p(M;\gamma (L^2(M^+),X))} ,
  \end{equation*}
  where $\gamma (\cdot)$ is the \emph{$\gamma$-bound} of the family $\{ A_B^\alpha \}_{B\subset M}$ on $L^p(M;X)$, i.e.
  the smallest constant $C_p$ so that
  \begin{equation*}
    \E \Big\| \sum_k \gamma_k A_{B_k}^\alpha f_k \Big\|_{L^p(M;X)}^2 \leq C_p^2
    \E \Big\| \sum_k \gamma_k f_k \Big\|_{L^p(M;X)}^2
  \end{equation*}
  for any (finite) collections of balls $B_k\subset M$ and functions $\{ f_k \} \subset L^p(M;X)$.

  In order to calculate the $\gamma$-bound, we approximate $A_B^\alpha$ by dyadic averaging operators.
  Recall that a \emph{dyadic system} on a $M$ is a collection $\Dd = \{\Dd_k\}_{k \in \Z}$, where each $\Dd_k$ is a 
  partition of $M$ into sets of finite positive measure, such that the containment relations
  \begin{equation*}
    Q\in\Dd_k , \quad Q'\in\Dd_{k'} , \quad k'\geq k \quad \Longrightarrow
    \quad Q'\subset Q \quad \text{or} \quad Q\cap Q' = \emptyset
  \end{equation*}
  hold. By Stein's inequality (see \cite[Lemma 3.1]{HVNPCONICAL} and
  the references therein), the families $\{ A_Q \}_{Q\in\Dd}$ of localized dyadic averaging operators
  \begin{equation*}
    A_Q f = 1_Q \fint_Q f\D\mu
  \end{equation*}
  are $\gamma$-bounded on $L^p(M;X)$ when $1 < p < \infty$.

  In \cite{HYTONENKAIREMA} it is shown 
  that one can choose a finite number of dyadic systems
  on $M$ so that every ball $B\subset M$ is contained in a dyadic cube $Q_B$ from one of the dyadic systems, with 
  $\textup{diam}\,(Q_B) \lesssim \textup{diam}\,(B)$. Therefore we may write
  \begin{equation*}
    A_B^\alpha f = 1_{\alpha B} \frac{\mu (Q_{\alpha B})}{\mu (B)} A_{Q_{\alpha B}} (1_Bf) ,
  \end{equation*}
  and hence
  \begin{equation*}
    \gamma (A_B^\alpha : B\subset M) \lesssim \frac{\mu (Q_{\alpha B})}{\mu (B)} \lesssim \alpha^n ,
  \end{equation*}
  with a constant depending on $p$.
  
  The claim of change of aperture now follows from the identity $J_\alpha u = N_\alpha J u$. Duality and complex interpolation
  follow from the corresponding results for complemented subspaces of vector-valued $L^p$-spaces.
  \end{proof}
\end{prop}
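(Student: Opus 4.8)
The plan is to realize each $T^p_\alpha(X)$ as a complemented subspace of the vector-valued Lebesgue space $L^p(M; \gamma(L^2(M^+), X))$ and then to inherit all three properties from that ambient space. First I would check that the map $J_\alpha u(x) = u 1_{\Gamma_\alpha(x)}$ is an isometric embedding; this is immediate from the very definition of $\| \cdot \|_{T^p_\alpha(X)}$ as the $L^p$-norm of $x \mapsto \| u 1_{\Gamma_\alpha(x)}\|_{\gamma(L^2(M^+),X)}$. Next I would produce a bounded linear projection $N_\alpha$ of $L^p(M; \gamma(L^2(M^+), X))$ onto the range of $J_\alpha$. The natural candidate averages the slice $F_{y,t} : x \mapsto F(x; y, t)$ over the ball $B(y,t)$ and reinserts the cone, so that $N_\alpha F(\cdot; y, t) = A_{B(y,t)}^\alpha F_{y,t}$ with $A_B^\alpha f = 1_{\alpha B} \fint_B f \D\mu$; one verifies directly that $N_\alpha J_\alpha = I$, whence $N_\alpha$ is a projection onto $J_\alpha(T^p_\alpha(X))$.

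The crux is the boundedness of $N_\alpha$, which reduces to estimating the \emph{$\gamma$-bound} of the family of localized averaging operators $\{ A_B^\alpha \}_{B \subset M}$ on $L^p(M; X)$. This is the one point where both the geometry of $M$ and the Banach-space structure of $X$ genuinely enter, and I expect it to be the main obstacle. The plan is to dominate the continuous averages by dyadic ones: using a finite family of adjacent dyadic systems on $M$ (Hyt\"onen--Kairema), each dilated ball $\alpha B$ sits inside a dyadic cube $Q_{\alpha B}$ of comparable size, so that $A_B^\alpha$ factors through the dyadic average $A_{Q_{\alpha B}}$ up to the normalization $\mu(Q_{\alpha B})/\mu(B)$, which doubling controls by $\alpha^n$. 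The $\gamma$-boundedness of the dyadic conditional expectations $\{ A_Q \}_{Q \in \Dd}$ on $L^p(M; X)$ is then furnished by Stein's inequality, valid precisely because $X$ has UMD and $1 < p < \infty$. Tracking constants gives $\gamma(A_B^\alpha : B \subset M) \lesssim_p \alpha^n$.

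With the complementation in hand the three assertions follow in turn. \emph{Change of aperture} is read off from the factorization $J_\alpha u = N_\alpha J u$ together with the $\alpha^n$ bound on $N_\alpha$. For \emph{duality} I would dualize the ambient space, invoking reflexivity of UMD spaces and the trace duality $\gamma(L^2(M^+), X)^* \simeq \gamma(L^2(M^+), X^*)$, and then restrict the resulting pairing to the complemented subspace; since $X^*$ is again UMD, the adjoint projection $N_\alpha^*$ is bounded on $L^{p'}(M; \gamma(L^2(M^+), X^*))$ and cuts out $J_\alpha(T^{p'}(X^*))$, so that $T^p(X)^* \simeq T^{p'}(X^*)$ is realized by $\la u, v \ra = \iint_{M^+} \la u(y,t), v(y,t) \ra \, \frac{\textup{d}\mu(y) \D t}{t}$. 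Finally, because $N_\alpha$ is defined independently of $p$, it is simultaneously a bounded projection on the whole scale $\{ L^p(M; \gamma(L^2(M^+), X)) \}_{1 < p < \infty}$, so the standard interpolation theorem for complemented subspaces of a complex-interpolation scale yields $[T^{p_0}(X), T^{p_1}(X)]_\theta = T^p(X)$ for $1/p = (1-\theta)/p_0 + \theta/p_1$.
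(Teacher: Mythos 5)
Your proposal is correct and follows essentially the same route as the paper: the isometric embedding $J_\alpha$, the averaging projection $N_\alpha$, the reduction to the $\gamma$-bound of $\{A_B^\alpha\}$ via Stein's inequality and the Hyt\"onen--Kairema adjacent dyadic systems, and the deduction of all three properties from complementation in $L^p(M;\gamma(L^2(M^+),X))$. The only additions are minor explicit details (verifying $N_\alpha J_\alpha = I$, naming the trace duality $\gamma(L^2(M^+),X)^*\simeq\gamma(L^2(M^+),X^*)$) that the paper leaves implicit in its sketch.
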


\begin{remark}
  It should be pointed out that in the proof above the 
  $\gamma$-bounds of the families 
  $\{ A_Q \}_{Q\in\Dd}$ and $\{ A_B^\alpha \}_{B\subset M}$ 
  on $L^p(M;X)$ tend to infinity as $p\to 1$, and, therefore, so does
  the $p$-dependent constant obtained by this method 
  for the change of aperture.
\end{remark}

\subsection*{Cone covering property}
We now elaborate the additional geometric assumption on $M$ (originating from \cite{TENTSPACES}), which we use to extend
Proposition \ref{refl} to the endpoint $p=1$.
Given a $\sigma\in (0,1)$ we define the extension of an open set $E\subset M$ by
\begin{equation*}
  E^\sigma = \{ x\in M : \sup_{B\ni x} \frac{\mu (B\cap E)}{\mu (B)} > \sigma \} .
\end{equation*}
Note that $E^\sigma$ is open and satisfies $\mu (E^\sigma) \lesssim \sigma^{-1}\mu (E)$ by the weak type $(1,1)$ inequality
for the Hardy--Littlewood maximal function.
Recall that the tent $T(E)$ over an open set $E\subset M$ is given by
\begin{equation*}
  T(E) = \{ (y,t)\in M^+ : B(y,t)\subset E \} 
  = M^+ \setminus \bigcup_{x\not\in E} \Gamma (x) .
\end{equation*}

\begin{cone*}
  There exists a $\sigma\in (0,1)$ such that every bounded open set $E\subset M$ satisfies the following: 
  For every $x\in E$ there exist $x_1,\ldots , x_N\in M\setminus E$, with $N$ depending only on $M$, such that
  \begin{equation*}
    \Gamma (x) \setminus T(E^\sigma) \subset \bigcup_{m=1}^N \Gamma (x_m) .
  \end{equation*}
\end{cone*}

When $M$ has the cone covering property, $\sigma$ will be fixed and we write $E^\sigma = E^*$.

\begin{lemma}
  \label{pointwise}
  Suppose that $M$ has the cone covering property. Let $u\in L^2_c(M^+) \otimes X$ and write
  $E = \{ x\in M : \Aa u(x) > \lambda \}$ for a $\lambda > 0$. Then
  \begin{equation*}
    \Aa (u1_{M^+\setminus T(E^*)})(x) \lesssim \lambda \quad \text{for all $x\in M$} .
  \end{equation*}
  \begin{proof}
    If $x\in M\setminus E$, then
    \begin{equation*}
      \Aa (u1_{M^+\setminus T(E^*)})(x) \leq \Aa u(x) \leq \lambda
    \end{equation*}
    by the definition of $E$.
    Let then $x\in E$.
    Since $E$ is a bounded open set, we may use the cone covering property to pick
    $x_1,\ldots , x_N\in X\setminus E$ (with $N$ depending only on the dimension of $M$) such that
    \begin{equation*}
      \Gamma (x) \setminus T(E^*) \subset \bigcup_{m=1}^N \Gamma (x_m) .
    \end{equation*}
    We can then estimate
    \begin{equation*}
      \Aa (u1_{M^+\setminus T(E^*)})(x)
      = \Big( \E \Big\| \iint_{\Gamma (x) \setminus T(E^*)} u \D W \Big\|^2 \Big)^{1/2}
      \leq \sum_{m=1}^N \Big( \E \Big\| \iint_{\Gamma (x_m)} u \D W \Big\|^2 \Big)^{1/2} 
      \leq N\lambda,
    \end{equation*}
    as required.
  \end{proof}
\end{lemma}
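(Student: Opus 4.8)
The plan is to split the estimate according to whether $x$ lies outside or inside the level set $E$, the first case being immediate and the second being where the cone covering property does the work. The single analytic ingredient I would isolate first is a \emph{monotonicity} property of the area function: for measurable sets $K\subset K'\subset M^+$ one has
\begin{equation*}
  \| u1_K \|_{\gamma (L^2(M^+),X)} \lesssim \| u1_{K'} \|_{\gamma (L^2(M^+),X)} .
\end{equation*}
This follows at once from covariance domination, since for every $\xi^*\in X^*$ we have $\int_{M^+} |\la u1_K , \xi^* \ra |^2 \D\nu = \int_K |\la u , \xi^* \ra|^2\D\nu \leq \int_{K'} |\la u , \xi^* \ra|^2 \D\nu$, where $\nu = \frac{\textup{d}\mu (y)\D t}{tV(y,t)}$ (alternatively from the right ideal property, which even gives constant one). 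For $x\in M\setminus E$ this settles the claim immediately: since $\Gamma (x)\cap (M^+\setminus T(E^*)) \subset \Gamma (x)$, monotonicity and the definition of $E$ give $\Aa (u1_{M^+\setminus T(E^*)})(x) \lesssim \Aa u(x) \leq \lambda$.

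For $x\in E$ I would first note that $E$ is bounded, as $u$ is compactly supported and hence $\Aa u$ vanishes off a bounded set, and open, as $\Aa u$ is continuous for finite rank $u$. The cone covering property then supplies points $x_1,\dots ,x_N\in M\setminus E$, with $N$ depending only on $M$, such that $\Gamma (x)\setminus T(E^*) \subset \bigcup_{m=1}^N \Gamma (x_m)$. Using the stochastic integral representation of the $\gamma$-norm (legitimate because $u\in L^2_c(M^+)\otimes X$ is finite rank), I would disjointify the covering by setting $D_m = \big( \Gamma (x_m)\cap (\Gamma (x)\setminus T(E^*)) \big) \setminus \bigcup_{m'<m}\Gamma (x_{m'})$, so that the $D_m$ partition $\Gamma (x)\setminus T(E^*)$ with $D_m\subset \Gamma (x_m)$, and the integral decomposes as
\begin{equation*}
  \iint_{\Gamma (x)\setminus T(E^*)} u \D W = \sum_{m=1}^N \iint_{D_m} u \D W .
\end{equation*}
The triangle inequality in $L^2(\Omega ;X)$ followed by monotonicity then yields
\begin{equation*}
  \Aa (u1_{M^+\setminus T(E^*)})(x) \leq \sum_{m=1}^N \Big( \E \Big\| \iint_{D_m} u \D W \Big\|^2 \Big)^{1/2} \lesssim \sum_{m=1}^N \Aa u(x_m) \leq N\lambda ,
\end{equation*}
where the final bound uses $x_m\in M\setminus E$. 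As $N$ is a dimensional constant, this is the desired $\lesssim \lambda$.

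I expect the main obstacle to be conceptual rather than computational: unlike the scalar area integral, the $\gamma$-norm is not additive over an overlapping covering of $\Gamma (x)\setminus T(E^*)$ by the cones $\Gamma (x_m)$. This is precisely why I would disjointify into the pieces $D_m$ before invoking the triangle inequality, and use the covariance domination (or right ideal) bound only to pass from each $D_m$ back up to the full cone $\Gamma (x_m)$. The remaining point to verify is that $E$ is genuinely bounded and open so that the cone covering property is applicable; both follow from the compact support of $u$ together with the continuity of $x\mapsto \Aa u(x)$ for finite rank operators.
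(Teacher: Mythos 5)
Your proof is correct and follows essentially the same route as the paper: split on $x\in E$ versus $x\notin E$, invoke the cone covering property in the first case, and bound the stochastic integral over $\Gamma(x)\setminus T(E^*)$ by the sum of the $\gamma$-norms over the cones $\Gamma(x_m)$. Your explicit disjointification into the pieces $D_m$ followed by covariance domination carefully justifies the triangle-inequality step over the overlapping cover, which the paper states without comment.
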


\begin{remark}
  In \cite[Appendix B]{NONNEGATIVE} we have shown that every complete (connected) Riemannian manifold with non-negative sectional curvature 
  has the cone covering property.
  The lemma above should be compared with \cite[Lemma 4.4]{NONNEGATIVE}. Notice, that
  in the vector-valued setting, Bernal's convex reduction argument \cite{BERNAL} 
  is not available, which means that interpolation and
  change of aperture for $T^1(X)$ cannot be deduced from the reflexive range as in the scalar-valued case, 
  and this forces us to use the cone covering property.
\end{remark}

\subsection*{Atomic decomposition}

The main result of \cite{TENTSPACES} was the atomic decomposition for $T^1(X)$ on $\R^n$, which also relies
on the cone covering property. The proof generalizes directly to our setting.

\begin{def*}
  An $a\in T^1(X)$ is called an \emph{atom} associated with ball $B\subset M$ if $a1_{T(B)} = a$
  (i.e. $a$ is `supported' in $T(B)$) and $\| a \|_{T^2(X)} \leq \mu (B)^{-1/2}$.
\end{def*}

\begin{theorem}[Atomic decomposition]
\label{atomicdec}
  Suppose that $M$ has the cone covering property. Then
  every $u\in T^1(X)$ can be decomposed into atoms $a_k$ so that
  \begin{equation*}
    u = \sum_k \lambda_k a_k ,
  \end{equation*}
  where the sum converges in $T^1(X)$ and the scalars $\lambda_k$
  satisfy
  \begin{equation*}
    \sum_k |\lambda_k| \eqsim \| u \|_{T^1(X)} .
  \end{equation*}
  Moreover, if $u\in (T^1 \cap T^2) \otimes X$, then the sum converges also in
  $T^2(X)$.
\end{theorem}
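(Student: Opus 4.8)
The plan is to reproduce the classical Coifman--Meyer--Stein stopping-time construction, with the decisive pointwise control supplied by the cone covering property through Lemma \ref{pointwise}. By density (Appendix A) it suffices to treat $u\in (T^1\cap T^2)\otimes X$, so that $\Aa u\in L^1\cap L^2$ and $u$ has compact support in $M^+$. For $k\in\Z$ I would set $E_k=\{x\in M:\Aa u(x)>2^k\}$; these are bounded open sets, nested decreasingly in $k$, and I pass to their extensions $E_k^*$. The tents $T(E_k^*)$ are then nested, and, up to a null set, $M^+$ is the disjoint union of the layers $F_k=T(E_k^*)\setminus T(E_{k+1}^*)$, so that $u=\sum_k u1_{F_k}$.

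Next I would chop each $E_k^*$ into a Whitney family of balls $\{B_{k,j}\}_j$ with bounded overlap and radii comparable to their distance to $M\setminus E_k^*$. For a fixed dilation constant $c\geq 1$ one has $T(E_k^*)\subset\bigcup_j T(cB_{k,j})$, because any $(y,t)$ with $B(y,t)\subset E_k^*$ satisfies $t\lesssim d(y,M\setminus E_k^*)\eqsim r_{B_{k,j}}$ for the Whitney ball $B_{k,j}\ni y$. After refining this covering into a partition of $F_k$ subordinate to $\{T(cB_{k,j})\}_j$, I define $\lambda_{k,j}=\mu(cB_{k,j})^{1/2}\|u1_{F_k\cap T(cB_{k,j})}\|_{T^2(X)}$ and $a_{k,j}=\lambda_{k,j}^{-1}u1_{F_k\cap T(cB_{k,j})}$, so that $a_{k,j}$ is supported in $T(cB_{k,j})$ with $\|a_{k,j}\|_{T^2(X)}=\mu(cB_{k,j})^{-1/2}$, i.e.\ an atom associated with $cB_{k,j}$. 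A short cone computation shows moreover that $\Aa(u1_{F_k\cap T(cB_{k,j})})$ is supported in $cB_{k,j}$ itself: if $(y,t)\in\Gamma(x)\cap T(cB_{k,j})$ then $x\in B(y,t)\subset cB_{k,j}$.

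The quantitative heart is the estimate $\lambda_{k,j}\lesssim 2^k\mu(B_{k,j})$. Since multiplication by an indicator is an orthogonal projection on $L^2(M^+)$, the right-ideal property of $\gamma$-radonifying operators gives $\Aa(u1_{F_k\cap T(cB_{k,j})})(x)\leq\Aa(u1_{M^+\setminus T(E_{k+1}^*)})(x)$, and the right-hand side is $\lesssim 2^{k+1}$ for every $x$ by Lemma \ref{pointwise} applied at level $2^{k+1}$ (here $F_k\subset M^+\setminus T(E_{k+1}^*)$). Combined with the support bound,
\[
  \lambda_{k,j}^2=\mu(cB_{k,j})\int_{cB_{k,j}}\Aa(u1_{F_k\cap T(cB_{k,j})})(x)^2\D\mu(x)\lesssim 2^{2k}\mu(B_{k,j})^2 .
\]
Summing with the help of bounded overlap ($\sum_j\mu(B_{k,j})\lesssim\mu(E_k^*)$), the maximal estimate $\mu(E_k^*)\lesssim\sigma^{-1}\mu(E_k)$, and the layer-cake comparison $\sum_k 2^k\mu(E_k)\eqsim\|\Aa u\|_{L^1}$, yields $\sum_{k,j}\lambda_{k,j}\lesssim\|u\|_{T^1(X)}$. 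The reverse inequality is immediate from the triangle inequality once one notes that every atom satisfies $\|a\|_{T^1(X)}=\|\Aa a\|_{L^1}\leq\mu(B)^{1/2}\|\Aa a\|_{L^2}\lesssim 1$ by Cauchy--Schwarz and the support of $\Aa a$ in a dilate of $B$.

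Finally, for $u\in(T^1\cap T^2)\otimes X$ the partial sums equal $u1_{T(E_{k_0}^*)\setminus T(E_{k_1}^*)}$, which converge to $u$ in $T^2(X)$ by dominated convergence and in $T^1(X)$ because the tails of $\sum_{k,j}\lambda_{k,j}$ vanish; a routine approximation then extends the decomposition to all of $T^1(X)$. I expect the main obstacle to be the geometric bookkeeping of the second step---arranging the Whitney balls so that their dilated tents both cover the layers $F_k$ and keep the localized square functions supported in a single dilate---together with the discipline of working throughout with the $\gamma(L^2(M^+),X)$-norm rather than a pointwise $L^2$-integral. The genuinely vector-valued difficulty, namely the pointwise bound on $\Aa(u1_{M^+\setminus T(E^*)})$, has already been absorbed into Lemma \ref{pointwise} via the cone covering property, which is precisely what replaces the convexity argument unavailable in this setting.
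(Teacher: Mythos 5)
Your argument is correct and coincides with the proof the paper invokes: the theorem is not reproved here but deferred to the Coifman--Meyer--Stein stopping-time and Whitney-ball construction of \cite{TENTSPACES}, in which the cone covering property enters exactly through the uniform pointwise bound of Lemma \ref{pointwise}, and that is precisely the construction you have reproduced in the metric-measure setting. The steps you flag as delicate (openness and boundedness of the level sets $E_k$, the vanishing of $u$ off $\bigcup_k T(E_k^*)$, which follows by letting $k\to-\infty$ in Lemma \ref{pointwise}, and the final approximation from the dense class to all of $T^1(X)$) are routine and handled the same way there.
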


This allows us to extend the change of aperture estimate from Proposition \ref{refl} to $T^1(X)$.

\begin{prop}
\label{changeofaperture}
  Suppose that $X$ has UMD and that $M$ has the cone covering property. Let $\alpha \geq 1$. Then, given any $\varepsilon > 0$, we have
  \begin{equation*}
    \| \Aa_\alpha u \|_{L^1} \lesssim_\varepsilon \alpha^{n+\varepsilon} \| \Aa u \|_{L^1}
  \end{equation*}
  for every $u\in L^2_c(M^+) \otimes X$.
  \begin{proof}
    Note first that if $a$ is an atom associated with a ball $B\subset M$,
    then $\| a \|_{T^p(X)} \leq \mu (B)^{-(1-1/p)}$ for $1 \leq p \leq 2$
    as an immediate consequence of $\| a \|_{T^1(X)} \leq 1$.
    Secondly, for any ball $B$, $\Gamma_\alpha (x)$ intersects $T(B)$ exactly
    when $x\in \alpha B$. Thus, given an $\varepsilon > 0$ we may write
    $1 - 1/p = \varepsilon$ with a $p > 1$ and argue as follows:
    \begin{align*}
      \| \Aa_\alpha a \|_{L^1} &= \int_{\alpha B} \Aa_\alpha a(x) \D\mu (x)
      \leq \mu (\alpha B)^{1 - 1/p} \Big( \int_{\alpha B} \Aa_\alpha a(x)^p \D\mu (x) \Big)^{1/p} \\
      &\lesssim_p \mu (\alpha B)^{1 - 1/p} \alpha^n \| a \|_{T^p(X)}
      \leq \Big( \frac{\mu (\alpha B)}{\mu (B)} \Big)^{1 - 1/p} \alpha^n 
      = \alpha^{n+\varepsilon} ,
    \end{align*}
    where in the third step we used Proposition \ref{refl}. 
    The claim follows by the Atomic decomposition. 
  \end{proof}
\end{prop}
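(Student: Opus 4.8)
The plan is to pass from a general $u$ to a single atom via the Atomic decomposition (Theorem \ref{atomicdec}), and then to exploit the reflexive-range change of aperture (Proposition \ref{refl}) at an exponent $p>1$ chosen close to $1$, paying a harmless power of $\alpha$ for the privilege. Since $\Aa_\alpha$ obeys the triangle inequality in the $\gamma$-norm, the map $u\mapsto \| \Aa_\alpha u \|_{L^1}$ is subadditive. Writing $u=\sum_k \lambda_k a_k$ with $\sum_k |\lambda_k| \eqsim \| u \|_{T^1(X)} = \| \Aa u \|_{L^1}$, where the decomposition converges well because $u\in (T^1 \cap T^2) \otimes X$, it therefore suffices to prove the uniform bound $\| \Aa_\alpha a \|_{L^1} \lesssim_\varepsilon \alpha^{n+\varepsilon}$ for a single atom $a$ and then sum over $k$.

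So fix an atom $a$ associated with a ball $B$. The starting point is the geometric observation that a cone $\Gamma_\alpha (x)$ meets $T(B)$ precisely when $x\in \alpha B$; since $a$ is supported in $T(B)$, this forces $\Aa_\alpha a$ to vanish off $\alpha B$. Applying H\"older's inequality on $\alpha B$ with an exponent $p>1$ to be fixed gives
\begin{equation*}
  \| \Aa_\alpha a \|_{L^1} \leq \mu (\alpha B)^{1-1/p} \| \Aa_\alpha a \|_{L^p} ,
\end{equation*}
after which Proposition \ref{refl} lets me reduce to aperture one at the cost of $\alpha^n$, namely $\| \Aa_\alpha a \|_{L^p} \lesssim_p \alpha^n \| a \|_{T^p(X)}$. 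Interpolating the atomic normalisations $\| a \|_{T^1(X)} \leq 1$ and $\| a \|_{T^2(X)} \leq \mu (B)^{-1/2}$ yields $\| a \|_{T^p(X)} \leq \mu (B)^{-(1-1/p)}$ for $1\leq p\leq 2$, so that combining the three steps produces
\begin{equation*}
  \| \Aa_\alpha a \|_{L^1} \lesssim_p \alpha^n \Big( \frac{\mu (\alpha B)}{\mu (B)} \Big)^{1-1/p} .
\end{equation*}
The doubling estimate $\mu (\alpha B) \lesssim \alpha^n \mu (B)$ bounds the last factor by $\alpha^{n(1-1/p)}$, and choosing $p>1$ so that $n(1-1/p)=\varepsilon$ turns the $p$-dependent constant into an $\varepsilon$-dependent one and delivers exactly $\| \Aa_\alpha a \|_{L^1} \lesssim_\varepsilon \alpha^{n+\varepsilon}$.

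The delicate point is why one cannot simply take $p=1$ and recover the clean exponent $\alpha^n$. As the remark following Proposition \ref{refl} records, the $\gamma$-bound of the averaging operators, and with it the implicit constant in the reflexive change of aperture, blows up as $p\to 1$; the embedding argument therefore degenerates at the very endpoint we care about. The atomic decomposition is what circumvents this: each atom comes with the favourable $T^2$-bound $\mu (B)^{-1/2}$, which makes the H\"older step efficient and permits working at a fixed $p>1$. The cost of this detour is the $\varepsilon$-dependent constant together with the extra factor $\alpha^\varepsilon$ (absorbing $n(1-1/p)$ into $\varepsilon$), and this loss is intrinsic to the method, which is precisely why the estimate is stated as $\alpha^{n+\varepsilon}$ rather than $\alpha^n$.
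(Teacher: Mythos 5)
Your proposal is correct and follows essentially the same route as the paper: reduce to a single atom via the atomic decomposition, use the support observation that $\Aa_\alpha a$ vanishes off $\alpha B$, apply H\"older at an exponent $p>1$ close to $1$, invoke the reflexive-range change of aperture from Proposition \ref{refl}, and control $\| a \|_{T^p(X)}$ by the atomic normalisation. Your calibration $n(1-1/p)=\varepsilon$ is in fact slightly cleaner than the paper's choice $1-1/p=\varepsilon$ (which literally yields $\alpha^{n+n\varepsilon}$), but since $\varepsilon>0$ is arbitrary this is immaterial.
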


\begin{theorem}
\label{endpointduality}
  Suppose that $X$ has UMD and that $M$ has the cone covering property. Then $T^1(X)^* = T^\infty (X^*)$.
  \begin{proof}
    To see that every $v\in T^\infty (X^*)$ induces a bounded linear functional $\Lambda$ on $T^1(X)$, note first that
    for any ball $B\subset M$,
    \begin{align*}
      \| v1_{T(B)} \|_{T^2(X^*)} &= \Big( \int_B \Aa (v1_{T(B)})(x)^2 \D\mu (x) \Big)^{1/2} \\
      &\leq \Big( \int_B \Aa^{r_B} v(x)^2 \D\mu (x) \Big)^{1/2} \leq \mu (B)^{1/2} \| v \|_{T^\infty (X^*)} .
    \end{align*}
    By the Atomic decomposition, it suffices to define the action of $\Lambda$ on atoms: if $a$ is an atom in $T(B)$ we set
    $\Lambda a = \la a , v1_{T(B)} \ra$ so that
    \begin{equation*}
      |\Lambda a | \leq |\la a , v1_{T(B)} \ra | \leq \| a \|_{T^2(X)} \| v1_{T(B)} \|_{T^2(X^*)}
      \leq \| v \|_{T^\infty (X^*)} .
    \end{equation*}
    This does not depend on $B$ in the sense that if $a$ is an atom in both $T(B)$ and $T(B')$, then
    $\la a , v1_{T(B)} \ra = \la a , v1_{T(B')} \ra$.
  
    Let $\Lambda\in T^1(X)^*$. For every open $E\subset M$ we have $\Gamma (x) \cap T(E) \neq \emptyset$ exactly when
    $x\in E$ so that $\Aa (u1_{T(E)})$ is supported in $E$ and 
    $\| u1_{T(E)} \|_{T^1(X)} \leq \mu (E)^{1/2} \| u1_{T(E)} \|_{T^2(X)}$ whenever $u\in T^2(X)$.
    Hence $\Lambda$ restricts to a bounded linear functional $\Lambda_E$ on the closed (complemented) subspace 
    $T^2_E(X) = \{ u1_{T(E)} : u\in T^2(X) \}$ of $T^2(X)$. Since $X$ has UMD, $T^2_E(X)^* = T^2_E(X^*)$ (by Proposition
    \ref{refl}) and there exists a $v_E\in T^2_E(X^*)$ so that $\Lambda_E u = \la u , v_E \ra$ for all $u\in T^2_E(X)$ and
    \begin{equation*}
      \| v_E \|_{T^2(X^*)} \eqsim \| \Lambda_E \|_{T^2_E(X)^*} \leq \mu (E)^{1/2} \| \Lambda \|_{T^1(X)^*} .
    \end{equation*}
    
    Moreover, $v_E 1_{T(E\cap E')} = v_{E'} 1_{T(E\cap E')}$ because for
    every $u\in T^2(X)$ we have
    $\la u, v_E 1_{T(E\cap E')} \ra = \Lambda (u1_{T(E\cap E')}) = \la u, v_{E'} 1_{T(E\cap E')} \ra$. Consequently,
    $v_E h = v_{E'}h$ for all $h\in L^2(K)$ whenever $K\subset T(E\cap E') = T(E) \cap T(E')$ and we may define
    a linear operator $v : L^2_c(M^+) \to X$ by $vh = v_Eh$ when $h\in L^2(K)$ with $K\subset T(E)$.
    
    To see that $\| v \|_{T^\infty (X^*)} \eqsim \| \Lambda \|_{T^1(X)^*}$ note first that    
    for any ball $B\subset M$, we have $\Gamma (x;r_B) \subset T(3B)$ whenever $x\in B$. Therefore
    \begin{equation*}
      \Big( \fint_B \Aa^{r_B}v(x)^2 \D\mu (x) \Big)^{1/2} \leq \frac{1}{\mu (B)^{1/2}} \Big( \int_B \Aa (v_{3B})(x)^2 \D\mu (x)
      \Big)^{1/2} \leq \frac{\| v_{3B} \|_{T^2(X^*)}}{\mu (B)^{1/2}}  \lesssim \| \Lambda \|_{T^1(X)^*} ,
    \end{equation*}
    and so $\| v \|_{T^\infty (X^*)} \lesssim \| \Lambda \|_{T^1(X)^*}$.
    On the other hand, by the Atomic decomposition, $\| \Lambda \|_{T^1(X)^*}$ is obtained by testing against atoms.
    Now, if $a$ is an atom in $T(B)$, then
    \begin{align*}
      |\Lambda a | = |\la a , v_B \ra | \leq \| a \|_{T^2(X)} \| v_B \|_{T^2(X^*)}
      &\leq \frac{1}{\mu (B)^{1/2}} \Big( \int_B \Aa v_B(x)^2 \D\mu (x) \Big)^{1/2} \\
      &\leq \Big( \fint_B \Aa^{r_B}v(x)^2 \D\mu (x) \Big)^{1/2} \leq \| v \|_{T^\infty (X^*)} .
    \end{align*}
  \end{proof}
\end{theorem}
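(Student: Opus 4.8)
The plan is to follow the classical Coifman--Meyer--Stein scheme \cite{CMSTENTSPACES}, with the Atomic decomposition (Theorem \ref{atomicdec}) replacing the scalar stopping-time arguments and the reflexive-range duality (Proposition \ref{refl}) supplying the Hilbertian input at the level $p=2$. I would prove the two inclusions separately. For the easy direction, that each $v\in T^\infty(X^*)$ defines a bounded functional on $T^1(X)$, I would first record the local estimate $\| v1_{T(B)} \|_{T^2(X^*)} \leq \mu(B)^{1/2}\| v \|_{T^\infty(X^*)}$, which follows from the definition of the $T^\infty$-norm once one observes that $\Aa(v1_{T(B)})$ is supported in $B$ and dominated there by $\Aa^{r_B}v$. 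By the Atomic decomposition it then suffices to control the pairing $\la a, v\ra$ for an atom $a$ in $T(B)$: writing $\la a, v\ra = \la a, v1_{T(B)}\ra$ and applying Cauchy--Schwarz in $T^2$ gives $|\la a, v\ra| \leq \| a \|_{T^2(X)}\| v1_{T(B)} \|_{T^2(X^*)} \leq \| v \|_{T^\infty(X^*)}$, whence summing the atomic expansion yields $|\la u, v\ra| \lesssim \| u \|_{T^1(X)}\| v \|_{T^\infty(X^*)}$.

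The substantial direction is the converse: given $\Lambda\in T^1(X)^*$, I would manufacture a representing $v\in T^\infty(X^*)$ by a localization-and-gluing argument. For each bounded open $E\subset M$ the restriction of $\Lambda$ to the complemented subspace $T^2_E(X) = \{ u1_{T(E)} : u\in T^2(X)\}$ is bounded with norm at most $\mu(E)^{1/2}\|\Lambda\|$, because $\Aa(u1_{T(E)})$ is supported in $E$ and H\"older's inequality converts the $T^1$-norm into $\mu(E)^{1/2}$ times the $T^2$-norm. Since $X$ has UMD, Proposition \ref{refl} gives $T^2_E(X)^* = T^2_E(X^*)$, so there is a $v_E\in T^2_E(X^*)$ representing $\Lambda$ on $T^2_E(X)$ with $\| v_E \|_{T^2(X^*)} \lesssim \mu(E)^{1/2}\|\Lambda\|$. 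The family $\{v_E\}$ is consistent, as $v_E$ and $v_{E'}$ agree on $T(E\cap E')$ by testing against functions supported there, so the $v_E$ glue to a single operator $v : L^2_c(M^+)\to X^*$.

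The crux is then to show that this glued $v$ lies in $T^\infty(X^*)$ with $\| v \|_{T^\infty(X^*)} \lesssim \|\Lambda\|$, and that it represents $\Lambda$. For the norm bound I would use the geometric containment $\Gamma^{r_B}(x)\subset T(3B)$ for $x\in B$, which lets me replace $v$ by $v_{3B}$ over the ball $B$ and estimate $\fint_B \Aa^{r_B}v(x)^2\D\mu(x) \leq \mu(B)^{-1}\| v_{3B} \|_{T^2(X^*)}^2 \lesssim \|\Lambda\|^2$, using doubling to absorb $\mu(3B)/\mu(B)$. That $\Lambda u = \la u, v\ra$ is obtained, once more, from the Atomic decomposition: on an atom $a$ in $T(B)$ one has $\Lambda a = \la a, v_B\ra = \la a, v\ra$.

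I expect the main obstacle to be precisely this passage from the local $p=2$ representatives to a genuine $T^\infty$ bound. The local control $\mu(E)^{1/2}\|\Lambda\|$ blows up as $E$ exhausts $M$, and one must check that it collapses into the scale-invariant averaged $T^\infty$ estimate exactly because of the $\mu(B)^{-1}$ normalization combined with doubling. It is also worth emphasizing that the cone covering property, through the Atomic decomposition, is what legitimizes testing $\Lambda$ against atoms throughout; without it, neither the reduction in the easy direction nor the final identification of $v$ would be available in the vector-valued setting.
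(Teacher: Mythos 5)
Your proposal is correct and follows essentially the same route as the paper: the same local estimate $\| v1_{T(B)} \|_{T^2(X^*)} \leq \mu(B)^{1/2}\| v \|_{T^\infty(X^*)}$ plus the Atomic decomposition for the easy direction, and for the converse the same localization to the complemented subspaces $T^2_E(X)$, the UMD-based $T^2$ duality from Proposition \ref{refl}, consistency gluing of the local representatives $v_E$, and the containment $\Gamma^{r_B}(x)\subset T(3B)$ to recover the $T^\infty$ bound. The obstacle you flag (passing from the non-scale-invariant local bound $\mu(E)^{1/2}\|\Lambda\|$ to the averaged $T^\infty$ estimate) is resolved exactly as you anticipate, by the $\mu(B)^{-1}$ normalization and doubling.
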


\begin{remark}
  That every $v\in T^\infty (X^*)$ induces a bounded linear functional on $T^1(X)$ follows also from the inequality
  \begin{equation*}
    \iint_{M^+} |\la u(y,t),v(y,t) \ra | \, \frac{\textup{d}\mu(y)\D t}{tV(y,t)} \lesssim \| u \|_{T^1(X)}
    \| v \|_{T^\infty (X^*)} , \quad u\in T^1 \otimes X ,
  \end{equation*}
  where $v$ is assumed to be a function. This can be proved as in \cite{HYTONENWEISPARAPRODUCTS} and \cite{CMSTENTSPACES}.
\end{remark}

\subsection*{Interpolation}
Our first main result extends the complex interpolation scale of vector-valued tent spaces 
\cite[Theorem 4.7]{HVNPCONICAL} to the endpoint $p=1$. The argument
presented here fills the gap in the proof of \cite[Lemma 5]{CMSTENTSPACES}
(see also \cite[Remark 3.20]{AMENTA}) by using the cone covering property.

\begin{theorem}
\label{interpolationendpoint}
  Suppose that $X$ has type $r \in (1,2]$ and that $M$ has the cone covering property. Then
  \begin{equation*}
    [T^1(X),T^r(X)]_\theta = T^p(X) , \quad \textup{where}\quad \frac{1}{p} = 1- \theta (1 - \frac{1}{r}) .
  \end{equation*}
  \begin{proof}
    We first check that $[T^1(X),T^r(X)]_\theta \subset T^p(X)$.
    Let $\Upsilon : \overline{S} \to T^1(X) + T^r(X)$ be a function that\footnote{The reader is referred to \cite[Chapter 4]{BERGH} for details on complex interpolation.}
    \begin{itemize}
    \item is analytic in the strip $S = \{ \zeta\in\C : 0 < \textup{Re}\, \zeta < 1 \}$, 
    \item is continuous and bounded on $\overline{S}$,
    \item has $\| \Upsilon (is) \|_{T^1(X)} \lesssim 1$ and $\| \Upsilon (1+is) \|_{T^r(X)} \lesssim 1$ for all $s\in\R$.
    \end{itemize}
    Denote $Y=\gamma (L^2(M^+),X)$ and
    recall the embedding
    $T^p(X) \hookrightarrow L^p(M;Y)$ given by $Ju(x) = u1_{\Gamma (x)}$.
    Then 
    $J\circ \Upsilon : \overline{S} \to L^1(M ; Y) + L^r(M ; Y)$    
    and we may rely on complex interpolation for vector-valued $L^q$-spaces to see that
    \begin{align*}
      \| \Upsilon (\theta) \|_{T^p(X)} &= \| J\circ \Upsilon (\theta) \|_{L^p(M ; Y)} \\
      &\leq \max \Big\{ \sup_{s\in\R} \| J\circ \Upsilon (is) \|_{L^1(M ; Y)} , \;
      \sup_{s\in\R} \| J\circ \Upsilon (1+is) \|_{L^r(M ; Y)} \Big\} \\
      &= \max \Big\{ \sup_{s\in\R} \| \Upsilon (is) \|_{T^1(X)} , \; \sup_{s\in\R} \| \Upsilon (1+is) \|_{T^r(X)} \Big\} ,
    \end{align*}
    which shows that $[T^1(X),T^r(X)]_\theta$ is boundedly contained in $T^p(X)$.
    
    We now show that $[T^1(X),T^r(X)]_\theta \supset T^p(X)$:
    Let $u\in L^2_c(M^+) \otimes X$ with $\| u \|_{T^p(X)} = 1$ and consider the open sets
    \begin{equation*}
      E_k = \{ x\in M : \Aa u(x) > 2^k \} , \quad k\in\Z .
    \end{equation*}
    Write $A_k = T(E_k^*) \setminus T(E_{k+1}^*)$ and define the interpolating function as in \cite[Lemma 5]{CMSTENTSPACES} by
    \begin{equation*}
      \Upsilon (\zeta) = \sum_{k\in\Z} 2^{k(\upsilon (\zeta)p - 1)} u1_{A_k} , \quad \text{where} \quad
      \upsilon (\zeta) = 1 - \zeta (1 - \frac{1}{r}),
    \end{equation*}
    so that $\Upsilon (\theta ) = u$. What remains is to check that 
    $\| \Upsilon (is) \|_{T^1(X)} \lesssim 1$ and
    $\| \Upsilon (1 + is) \|_{T^r(X)} \lesssim 1$ for all $s\in\R$.
    
    Let $s\in\R$ and note first that $|2^{k(\upsilon (is)p-1)}| \leq 2^{k(p-1)}$.
    Hence by triangle inequality
    \begin{equation*}
      \| \Upsilon (is) \|_{T^1(X)} \leq \sum_{k\in\Z} 2^{k(p-1)} \| u1_{A_k} \|_{T^1(X)} ,
    \end{equation*}
    where
    \begin{equation*}
      \| u1_{A_k} \|_{T^1(X)} = \int_{E_k^*} \Aa (u1_{A_k})(x) \D\mu (x) \lesssim 2^k \mu (E_k^*) ,
    \end{equation*}
    according to Lemma \ref{pointwise}.
    Consequently,
    \begin{equation*}
      \| \Upsilon (is) \|_{T^1(X)} \lesssim \sum_{k\in\Z} 2^{kp} \mu (E_k^*) \lesssim \| u \|_{T^p(X)}^p .
    \end{equation*}
    
    For a given $s\in\R$ we now estimate the second quantity 
    \begin{equation*}
      \| \Upsilon (1+is) \|_{T^r(X)}^r = \int_M \Big( \E \Big\| \iint_{\Gamma (x)}
      \sum_{k\in\Z} 2^{k(\upsilon (1+is)p-1)} u1_{A_k} \D W \Big\|^2 \Big)^{r/2} \D\mu (x) .
    \end{equation*}
    Noting that $|2^{k(\upsilon (1+is)p-1)}| \leq 2^{k(p/r - 1)}$ we argue using type $r$ of $X$:
    \begin{equation*}
      \Big( \E \Big\| \sum_{k\in\Z} \iint_{\Gamma (x)} 
      2^{k(\upsilon (1+is)p-1)} u1_{A_k} \D W \Big\|^2 \Big)^{1/2}
      \leq \Big( \sum_{k\in\Z} 2^{k(p-r)} \E \Big\| \iint_{\Gamma (x)} u1_{A_k} \D W \Big\|^r \Big)^{1/r} .
    \end{equation*}
    Therefore, by Lemma \ref{pointwise},
    \begin{align*}
      \| \Upsilon (1+is) \|_{T^r(X)}^r &\lesssim \sum_{k\in\Z} 2^{k(p-r)} 
      \int_{E_k^*} \E \Big\| \iint_{\Gamma (x)} u1_{A_k} \D W \Big\|^r \D\mu (x) \\
      &\lesssim \sum_{k\in\Z} 2^{k(p-r)} \int_{E_k^*} \Aa (u1_{A_k})(x)^r \D\mu (x) \\
      &\lesssim \sum_{k\in\Z} 2^{kp} \mu (E_k^*) 
      \lesssim \| u \|_{T^p(X)}^p ,
    \end{align*}
    as required.
\end{proof}
\end{theorem}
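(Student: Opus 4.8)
The plan is to prove the two inclusions separately, along the usual template for complex interpolation. For the inclusion $[T^1(X),T^r(X)]_\theta \subset T^p(X)$ I would rely on the isometric embedding $J : T^q(X) \hookrightarrow L^q(M;Y)$, $Ju(x) = u1_{\Gamma (x)}$, with $Y = \gamma (L^2(M^+),X)$, which is available for each $q \in \{1,r,p\}$ straight from the definition of the tent-space norm. Given an admissible analytic $\Upsilon$ on the strip $S$ with the prescribed endpoint bounds, I compose with $J$ so that $J\circ\Upsilon$ takes values in $L^1(M;Y) + L^r(M;Y)$, and then invoke the classical complex interpolation identity $[L^1(M;Y),L^r(M;Y)]_\theta = L^p(M;Y)$ for Bochner spaces (note $1/p = (1-\theta)\cdot 1 + \theta/r$). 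Since $J$ is isometric on each endpoint, the three-lines bound transfers verbatim and yields $\| \Upsilon (\theta) \|_{T^p(X)} \leq \max\{ \sup_s \| \Upsilon (is) \|_{T^1(X)}, \sup_s \| \Upsilon (1+is) \|_{T^r(X)} \}$. This direction is soft and uses no geometry.

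The substance is the reverse inclusion $T^p(X) \subset [T^1(X),T^r(X)]_\theta$. By density it suffices to treat $u\in L^2_c(M^+)\otimes X$ with $\| u \|_{T^p(X)} = 1$ and to exhibit an admissible $\Upsilon$ with $\Upsilon(\theta) = u$ and controlled boundary norms. Following Coifman--Meyer--Stein, I would slice $M$ by the level sets $E_k = \{ \Aa u > 2^k \}$ of the conical square function, pass to their extensions $E_k^*$, partition the support of $u$ in $M^+$ by the tent differences $A_k = T(E_k^*)\setminus T(E_{k+1}^*)$, and set $\Upsilon (\zeta) = \sum_k 2^{k(\upsilon (\zeta)p - 1)} u1_{A_k}$ with $\upsilon (\zeta) = 1 - \zeta(1 - 1/r)$, so that $\upsilon(\theta)p = 1$ gives $\Upsilon (\theta) = u$. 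The two endpoint estimates then hinge on a single support-plus-size fact about each piece: since $A_k \subset T(E_k^*)$, the function $\Aa (u1_{A_k})$ is supported in $E_k^*$, and since $A_k$ avoids $T(E_{k+1}^*)$, Lemma \ref{pointwise} gives $\Aa (u1_{A_k})(x) \lesssim 2^k$ uniformly in $x$. On the line $\textup{Re}\,\zeta = 0$ the coefficients have modulus $2^{k(p-1)}$, so the triangle inequality and these two facts give $\| u1_{A_k} \|_{T^1(X)} \lesssim 2^k \mu(E_k^*)$ and hence $\sum_k 2^{kp}\mu(E_k^*) \lesssim \| u \|_{T^p(X)}^p$.

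The hard part will be the line $\textup{Re}\,\zeta = 1$, where the coefficients have modulus $2^{k(p/r - 1)}$ and a crude triangle inequality loses too much. Here I would invoke that $X$ has type $r$: since the pieces $u1_{A_k}$ are disjointly supported in $M^+$, the stochastic-integral form of the type-$r$ inequality recorded in the preliminaries lets me bound the $L^2$-stochastic norm of $\sum_k 2^{k(\upsilon (1+is)p-1)} u1_{A_k}$ over $\Gamma (x)$ by $\big( \sum_k 2^{k(p-r)}\, \E \| \iint_{\Gamma (x)} u1_{A_k}\,\D W \|^r \big)^{1/r}$. Raising to the power $r$, integrating over $M$, and applying Khintchine--Kahane together with the same support-and-pointwise control from Lemma \ref{pointwise} collapses the estimate back to $\sum_k 2^{kp}\mu(E_k^*) \lesssim 1$. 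It is exactly this maneuver — trading the $\ell^2$-type stochastic sum for an $\ell^r$-sum over disjoint pieces — that both forces the type-$r$ hypothesis and, through the cone covering property packaged in Lemma \ref{pointwise}, supplies the per-piece bounds that are missing from the classical scalar argument.
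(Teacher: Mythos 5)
Your proposal is correct and follows essentially the same route as the paper: the soft inclusion via the isometric embedding $J$ into $L^q(M;Y)$ and interpolation of Bochner spaces, and the converse via the Coifman--Meyer--Stein decomposition over $A_k = T(E_k^*)\setminus T(E_{k+1}^*)$, with Lemma \ref{pointwise} supplying the per-piece bound $\Aa(u1_{A_k}) \lesssim 2^k$ on $E_k^*$ and the type-$r$ stochastic-integral inequality handling the line $\textup{Re}\,\zeta = 1$. The only cosmetic difference is that you invoke Khintchine--Kahane explicitly at the last step, whereas the paper has already absorbed it into the type-$r$ inequality for disjointly supported stochastic integrals stated in the preliminaries.
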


\begin{remark}
  It is clear that for $1\leq p < \infty$, the tent spaces $T^p(X)$ embed continuously into
  $L^1_{loc}(M;\gamma (L^2(M^+),X))$. Another possible choice for an ambient space, one that is suitable also for
  $T^\infty (X)$, is the space of linear operators $u : L^2(M^+) \to X$ equipped with the seminorms
  $\| u1_K \|_{\gamma (L^2(M^+),X)}$ with $K\subset M^+$ ranging over compact subsets of $M^+$.
\end{remark}

\begin{cor}[Complex interpolation]
\label{tentmain1}
  Suppose that $X$ has UMD and that $M$ has the cone covering property. Let $1\leq p_0 \leq p_1 \leq \infty$. Then
  \begin{equation*}
    [T^{p_0}(X),T^{p_1}(X)]_\theta = T^p(X) , \quad \text{where} \quad
    \frac{1}{p} = \frac{1-\theta}{p_0} + \frac{\theta}{p_1} .
  \end{equation*}
  \begin{proof}
    By Proposition \ref{refl} the claim is true for $1 < p_0 \leq p_1 < \infty$.
    First, take $r > 1$ so that $X$ has type $r$. 
    The statement then follows for $p_0=1$ and $p_1=r$ from Theorem \ref{interpolationendpoint}.
    For $p_0=2$ and $p_1=\infty$ we argue by duality. Note that $1/p = (1-\theta ) /2$ implies that
    $1/p' = 1 - \theta' + \theta' / 2$ for $\theta' = 1-\theta$. Then
    \begin{equation*}
      [T^2(X) , T^\infty (X)]_\theta = [T^1(X^*) , T^2(X^*)]_{\theta'}^* = T^{p'}(X^*)^* = T^p(X)
    \end{equation*}
    by reflexivity of $X$ and Proposition \ref{refl}.
    The full statement now follows by reiteration (and its converse). 
  \end{proof}
\end{cor}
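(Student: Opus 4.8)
The plan is to build the whole scale $1 \le p_0 \le p_1 \le \infty$ out of three facts already established --- the reflexive range (Proposition \ref{refl}), the lower endpoint obtained from the type of $X$ (Theorem \ref{interpolationendpoint}), and the duality $T^1(X)^* \simeq T^\infty(X^*)$ (Theorem \ref{endpointduality}) --- stitched together by reiteration for complex interpolation.

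First I would dispose of the lower endpoint $p_0 = 1$. Since $X$ has UMD it is reflexive and carries nontrivial type, so I fix $r \in (1,2]$ with $X$ of type $r$; Theorem \ref{interpolationendpoint} then yields $[T^1(X), T^r(X)]_\theta = T^p(X)$ along the segment $1/p = 1 - \theta(1 - 1/r)$. To reach an arbitrary right endpoint $q \in (1, \infty)$ I would reiterate: realizing an intermediate reflexive space $T^s(X)$, $1 < s < r$, both as an interpolation space of $(T^1(X), T^r(X))$ and, interpolated against $T^q(X)$, inside the reflexive range of Proposition \ref{refl}, a reiteration theorem (Wolff's, say) identifies the intermediate tent spaces as complex interpolation spaces of the couple $(T^1(X), T^q(X))$. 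This promotes the lower-endpoint identity to $[T^1(X), T^q(X)]_\theta = T^p(X)$ for every $1 < q < \infty$.

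For the upper endpoint I would pass to the dual. UMD is self-dual and $X$ is reflexive, so $X^*$ is again UMD and the previous paragraph applies to it verbatim; in particular $[T^1(X^*), T^2(X^*)]_{\theta'} = T^{p'}(X^*)$ for the exponent $1/p' = 1 - \theta'/2$. Writing both $T^2(X)$ and $T^\infty(X)$ as duals --- $T^2(X) = T^2(X^*)^*$ by Proposition \ref{refl} and $T^\infty(X) = T^1(X^*)^*$ by Theorem \ref{endpointduality} together with reflexivity --- and applying the Calder\'on duality formula $[B_0, B_1]_\theta^* = [B_0^*, B_1^*]_\theta$ (licensed since $T^2(X^*)$ is reflexive), I obtain $[T^2(X), T^\infty(X)]_\theta = T^{p'}(X^*)^* = T^p(X)$ with $\theta' = 1 - \theta$. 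A further reiteration against the reflexive range then extends this to $[T^{p_0}(X), T^\infty(X)]_\theta$ for all $1 < p_0 < \infty$, and a last reiteration combining this with the lower-endpoint range covers the corner $p_0 = 1$, $p_1 = \infty$.

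I expect the main obstacle to be bookkeeping rather than new analysis: one must verify that the couples involved are regular (so that both the reiteration theorem and the duality formula genuinely apply) and keep the interpolation parameters consistent across the dualizations. All the substantive input is upstream --- that $T^1$ lies below the reflexive range by virtue of type (Theorem \ref{interpolationendpoint}) and that it is predual to $T^\infty$ (Theorem \ref{endpointduality}) --- and these are exactly what render both endpoints reachable.
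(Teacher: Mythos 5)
Your proposal is correct and follows essentially the same route as the paper: the reflexive range from Proposition \ref{refl}, the lower endpoint via type $r$ and Theorem \ref{interpolationendpoint}, the upper endpoint by dualizing the couple $(T^1(X^*),T^2(X^*))$ through Theorem \ref{endpointduality}, and reiteration to assemble the full scale. The only difference is presentational --- you spell out the reiteration steps (including Wolff-type arguments and the reflexivity hypothesis for the Calder\'on duality formula) that the paper compresses into ``reiteration (and its converse)''.
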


\subsection*{Integral operators on tent spaces}
We will then consider integral operators on tent spaces. Given an operator-valued kernel
$K : (0,\infty) \times (0,\infty) \to \Ll (L^2(M))$ we define
\begin{equation*}
  Su(\cdot , t) = \int_0^\infty K(t,s)u(\cdot , s) \Ds , \quad t > 0, \quad u\in L^2_c(M^+) \otimes X .
\end{equation*}
The following result extends \cite[Corollary 5.1]{HVNPCONICAL} to $T^1(X)$. In the statement and the proof, 
the only difference to the Euclidean setting is that
we might no longer have $\mu (B(x,t)) \eqsim t^n$, and therefore have to assume more decay from the kernel.

\begin{theorem}
\label{intop}
  Suppose that $X$ has UMD and that $M$ has the cone covering property. Assume that the kernel satisfies
  for all $t,s>0$ the estimate
  \begin{equation}
  \label{OD}
    \| 1_{E'} K(t,s) (1_Ef) \|_{L^2} 
    \lesssim \min \Big( \frac{t^\alpha}{s^\alpha} , \frac{s^\beta}{t^\beta} \Big)
    \Big( 1 + \frac{d(E,E')}{\max (t,s)} \Big)^{-\gamma} \| 1_Ef \|_{L^2}
  \end{equation}
  whenever $E,E' \subset M$ are measurable and $f\in L^2(M)$, and that
  $\gamma > 3n/2$ and $\alpha , \beta > n$. Then $S$ is bounded on $T^p(X)$ for every $1\leq p < \infty$.
  \begin{proof}
    Let $u\in L^2_c(M^+) \otimes X$.
    We closely follow the proofs of \cite[Propositions 5.4 and 5.5]{HVNPCONICAL} and split the operator $S$ into two parts
    \begin{equation*}
      S_\infty u(\cdot , t) = \int_t^\infty K(t,s)u(\cdot , s) \Ds \quad \text{and} \quad
      S_0 u(\cdot , t) = \int_0^t K(t,s)u(\cdot , s) \Ds .
    \end{equation*}

    \emph{The operator $S_\infty$}:
    We estimate $\Aa (S_\infty u)$ pointwise by a sum of $\Aa_{2^{k+1}} u$'s.    
    In order to do this, fix an $x\in M$ and write
    \begin{equation*}
      S_\infty u(\cdot , t) = \sum_{k=0}^\infty \int_t^\infty K(t,s) ( 1_{C_k(x,s)} u(\cdot , s)) \Ds 
      =: \sum_{k=0}^\infty u_k(\cdot , t) ,
    \end{equation*}
    where $C_k(x,s) = B(x,2^{k+1}s) \setminus B(x,2^ks)$ for $k\geq 1$ and $C_0(x,s) = B(x,2s)$.
    The desired estimate
    \begin{equation*}
      \Big( \E \Big\| \iint_{\Gamma (x)} u_k \D W \Big\|^2 \Big)^{1/2} \lesssim 2^{-k\delta}
      \Big( \E \Big\| \iint_{\Gamma_{2^{k+1}} (x)} u \D W \Big\|^2 \Big)^{1/2} ,
    \end{equation*}
    with $\delta > 0$ follows by Covariance domination once we have established that for all $\xi^*\in X^*$,
    \begin{equation*}
      \Big( \iint_{\Gamma (x)} | \la u_k(y,t),\xi^* \ra |^2 \, \frac{\textup{d}\mu (y) \D t}{t V(y,t)} \Big)^{1/2}
      \lesssim 2^{-k\delta} \Big( \iint_{\Gamma_{2^{k+1}}(x)} | \la u(y,t),\xi^* \ra |^2 \, 
      \frac{\textup{d}\mu (y) \D t}{t V(y,t)} \Big)^{1/2} ,
    \end{equation*}
    where
    \begin{equation*}
      \la u_k(\cdot , t) , \xi^* \ra = \Big\la \int_t^\infty K(t,s) ( 1_{C_k(x,s)} u(\cdot , s)) \Ds , \xi^* \Big\ra
      = \int_t^\infty K(t,s) ( 1_{C_k(x,s)} \la u(\cdot , s) , \xi^* \ra ) \Ds .
    \end{equation*}
    
    For a fixed $\xi^*\in X^*$ denote $\hat{u}(\cdot ,s) = \la u( \cdot ,s) , \xi^* \ra$.
    When $(y,t)\in\Gamma (x)$ we have $V(y,t) \eqsim V(x,t)$ and so
    \begin{align*}
      I_k(x) :&= \Big( \iint_{\Gamma (x)} \Big| \int_t^\infty K(t,s)
      ( 1_{C_k(x,s)} \hat{u}(\cdot , s))(y) \Ds \Big|^2 \,\frac{\textup{d}\mu (y) \,\textup{d}t}{tV(y,t)}
      \Big)^{1/2} \\
      &\lesssim \Big( \int_0^\infty \Big( \int_t^\infty \| 1_{B(x,t)} K(t,s) ( 1_{C_k(x,s)} \hat{u}(\cdot , s) )
      \|_{L^2} \Ds \Big)^2 \,\frac{\textup{d}t}{tV(x,t)} \Big)^{1/2} .
    \end{align*}
    For $s>t$ we have $d(B(x,t),C_k(x,s)) \gtrsim 2^ks$ (when $k\geq 1$) and so by \eqref{OD},
    \begin{equation*}
      \| 1_{B(x,t)} K(t,s)(1_{C_k(x,s)}\hat{u}(\cdot ,s )) \|_{L^2}
      \lesssim \Big( \frac{t}{s} \Big)^\alpha 2^{-k\gamma} \| 1_{B(x,2^{k+1}s)} \hat{u}(\cdot , s) \|_{L^2} .
    \end{equation*}
    Therefore
    \begin{align*}
      &\Big( \int_t^\infty \| 1_{B(x,t)} K(t,s) ( 1_{C_k(x,s)} \hat{u}(\cdot , s) )
      \|_{L^2} \Ds \Big)^2 \\
      &\lesssim \int_t^\infty \Big( \frac{t}{s} \Big)^{2\varepsilon} \Ds
      \int_t^\infty \Big( \frac{t}{s} \Big)^{2(\alpha - \varepsilon)} 4^{-k\gamma} 
      \| 1_{B(x,2^{k+1}s)} \hat{u}(\cdot , s) \|_{L^2}^2
      \Ds ,
    \end{align*}
    where the first integral on the right hand side is bounded by a constant (depending on $\varepsilon$).
    
    Plugging this in we get
    \begin{align*}
      I_k(x) &\lesssim 2^{-k\gamma} \Big( \int_0^\infty \int_t^\infty \Big( \frac{t}{s} \Big)^{2(\alpha - \varepsilon)}
      \| 1_{B(x,2^{k+1}s)} \hat{u}(\cdot , s) \|_{L^2}^2 \Ds \, \frac{\textup{d}t}{tV(x,t)} \Big)^{1/2} \\
      &= \Big( \int_0^\infty \| 1_{B(x,2^{k+1}s)} \hat{u}(\cdot , s) \|_{L^2}^2 \int_0^s 
      \Big( \frac{t}{s} \Big)^{2(\alpha - \varepsilon)} \,\frac{\textup{d}t}{tV(x,t)} \Ds , 
    \end{align*}
    where the integration limits are obtained from the identity $1_{(t,\infty)}(s) = 1_{(0,s)}(t)$.
    
    To estimate the inner integral we proceed as follows:
    \begin{align*}
      \int_0^s \Big( \frac{t}{s} \Big)^{2(\alpha - \varepsilon)} \,\frac{\textup{d}t}{tV(x,t)}
      &= \sum_{j=0}^\infty \int_{2^{-(j+1)}s}^{2^{-j}s} 
      \Big( \frac{t}{s} \Big)^{2(\alpha - \varepsilon)} \,\frac{\textup{d}t}{tV(x,t)} \\
      &\leq \sum_{j=0}^\infty \frac{1}{V(x,2^{-(j+1)}s)} \int_{2^{-(j+1)}s}^{2^{-j}s} 
      \Big( \frac{t}{s} \Big)^{2(\alpha - \varepsilon)} \Dt \\
      &\lesssim \sum_{j=0}^\infty \frac{2^{nj}}{V(x,s)} 2^{-j(\alpha - \varepsilon)} \\
      &= \frac{1}{V(x,s)} \sum_{j=0}^\infty 2^{-j(\alpha - \varepsilon -n)} \leq \frac{1}{V(x,s)} ,
    \end{align*}
    where $\varepsilon$ is chosen small enough so that $\alpha - \varepsilon > n$.
    
    We have now established
    \begin{equation*}
      I_k(x) \lesssim 2^{-k\gamma} \Big( \int_0^\infty \int_{B(x,2^{k+1}s)} |\hat{u}(y,s)|^2 \D\mu(y) 
      \, \frac{\textup{d}s}{s V(x,s)} \Big)^{1/2} .
    \end{equation*}
    For $y\in B(x,2^{k+1}s)$ we have
    \begin{equation*}
      \frac{1}{V(x,s)} \leq \Big( 1 + \frac{d(x,y)}{s} \Big)^{n_0} \frac{1}{V(y,s)}
      \lesssim 2^{n_0k} \frac{1}{V(y,s)}
    \end{equation*}
    and so
    \begin{equation*}
      I_k(x) \lesssim 2^{-k(\gamma -n_0/2)} \Big( \iint_{\Gamma_{2^{k+1}}(x)} |\hat{u}(y,s)|^2 \,
      \frac{\textup{d}\mu (y) \,\textup{d}s}{sV(y,s)} \Big)^{1/2} .
    \end{equation*}
    
    In other words we have shown that
    \begin{equation}
    \label{S_infty}
      \Aa (S_\infty u)(x) \leq \sum_{k=0}^\infty \Aa u_k (x) \lesssim \sum_{k=0}^\infty 2^{-k(\gamma -n_0/2)} \Aa_{2^{k+1}}u(x) .
    \end{equation}

    \emph{The operator $S_0$}:
    To estimate $\Aa (S_0 u)(x)$ by a sum of $\Aa_{2^{k+m+2}} u(x)$'s for a fixed $x\in M$   
    we write
    \begin{equation*}
      S_0 u(\cdot , t) = \sum_{k,m=0}^\infty \int_{2^{-(m+1)}t}^{2^{-m}t} K(t,s) ( 1_{C_k(x,t)} u(\cdot , s)) \Ds .
    \end{equation*}
    For a fixed $\xi^*\in X^*$ we again write $\hat{u}(\cdot ,s) = \la u(\cdot ,s) , \xi^* \ra$ and estimate as above:
    \begin{align*}
      I_{k,m}(x) :&= \Big( \iint_{\Gamma (x)} \Big| 
      \int_{2^{-(m+1)}t}^{2^{-m}t} K(t,s) ( 1_{C_k(x,t)} \hat{u}(\cdot , s)) \Ds \Big|^2 
      \,\frac{\textup{d}\mu (y) \,\textup{d}t}{tV(y,t)} \Big)^{1/2} \\
      &\lesssim \Big( \int_0^\infty \Big( \int_{2^{-(m+1)}t}^{2^{-m}t} \| 1_{B(x,t)} K(t,s) ( 1_{C_k(x,t)} \hat{u}(\cdot , s) )
      \|_{L^2} \Ds \Big)^2 \,\frac{\textup{d}t}{tV(x,t)} \Big)^{1/2} .
    \end{align*}
    
    By \eqref{OD}, we have
    \begin{equation*}
      \| 1_{B(x,t)} K(t,s)(1_{C_k(x,t)}\hat{u}(\cdot ,s )) \|_{L^2}
      \lesssim \Big( \frac{s}{t} \Big)^\beta 2^{-k\gamma} \| 1_{B(x,2^{k+1}t)} \hat{u}(\cdot , s) \|_{L^2}
    \end{equation*}
    and so by H{\"o}lder's inequality,
    \begin{align*}
      \Big( \int_{2^{-(m+1)}t}^{2^{-m}t} \| 1_{B(x,t)} K(t,s) ( 1_{C_k(x,t)} \hat{u}(\cdot , s) )
      \|_{L^2} \Ds \Big)^2
      &\lesssim \int_{2^{-(m+1)}t}^{2^{-m}t} \Big( \frac{s}{t} \Big)^{2\beta}
       4^{-k\gamma} \| 1_{B(x,2^{k+1}t)} \hat{u}(\cdot , s) \|_{L^2}^2 \Ds .
    \end{align*}
    
    Plugging this in we obtain
    \begin{align*}
      I_{k,m}(x) &\lesssim 2^{-k\gamma}2^{-m\beta} \Big( \int_0^\infty \int_{2^{-(m+1)}t}^{2^{-m}t}
      \| 1_{B(x,2^{k+1}t)} \hat{u}(\cdot , s) \|_{L^2}^2 \Ds \Big)^{1/2} \\
      &\leq 2^{-k\gamma}2^{-m\beta} \int_0^\infty \int_{B(x,2^{k+m+2}s)} |\hat{u}(y,s)|^2 \mu (y)
      \int_{2^ms}^{2^{m+1}s} \frac{\textup{d}t}{tV(x,t)} \Ds ,
    \end{align*}
    where the exchange of the order of integration is justified by the fact that if
    $2^{-(m+1)}t < s \leq 2^{-m}t$, then $2^ms \leq t < 2^{m+1}s$ and $B(x,2^{k+1}t) \subset B(x,2^{k+m+2}s)$.
    
    When $y\in B(x,2^{k+m+2}s)$ we have
    \begin{align*}
      \int_{2^ms}^{2^{m+1}s} \frac{\textup{d}t}{tV(x,t)} \leq \frac{1}{V(x,2^ms)}
      \lesssim \Big( 1 + \frac{d(x,y)}{2^ms} \Big)^{n_0} \frac{1}{V(y,2^ms)}
      \lesssim \frac{2^{kn_0}}{V(x,s)} 
    \end{align*}
    and so
    \begin{equation*}
      I_{k,m}(x) \lesssim 2^{-k(\gamma -n_0/2)}2^{-m\beta} \Big( \iint_{\Gamma_{2^{k+m+2}}(x)} |\hat{u}(y,s)|^2
      \,\frac{\textup{d}\mu (y)\,\textup{d}s}{sV(y,s)} \Big)^{1/2} .
    \end{equation*}
    
    Again, by Covariance domination, we obtain
    \begin{equation}
    \label{S_0}
      \Aa (S_0 u)(x) \lesssim \sum_{k,m=0}^\infty 2^{-k(\gamma -n_0/2)}2^{-m\beta} \Aa_{2^{k+m+2}}u(x) .
    \end{equation}

    \emph{The operator $S$}: Let $1\leq p < \infty$. We bring together the estimates for $S_\infty$ and $S_0$. 
    From \eqref{S_infty} we obtain using change of aperture (Propositions \ref{refl} and \ref{changeofaperture})
    \begin{equation*}
      \| \Aa (S_\infty u) \|_{L^p} \lesssim \sum_{k=0}^\infty 2^{-k(\gamma -n_0/2)} \| \Aa_{2^{k+1}}u \|_{L^p}
      \lesssim_\varepsilon \sum_{k=0}^\infty 2^{-k(\gamma - 3n/2 - \varepsilon )} \| \Aa u \|_{L^p} .
    \end{equation*}
    Moreover, from \eqref{S_0} we obtain in a similar fashion that
    \begin{equation*}
      \| \Aa (S_0 u) \|_{L^p} 
      \lesssim \sum_{k,m=0}^\infty 2^{-k(\gamma -n_0/2)} 2^{-m\beta}  \| \Aa_{2^{k+m+2}} u \|_{L^p} 
      \lesssim_\varepsilon \sum_{k,m=0}^\infty 2^{-k(\gamma - 3n/2 - \varepsilon)} 2^{-m(\beta - n - \varepsilon)}  \| \Aa u \|_{L^p} .
    \end{equation*}
    Consequently, choosing $\varepsilon$ small enough so that 
    $\gamma - \varepsilon > 3n/2$ and $\beta - \varepsilon > n$ we get
    \begin{equation*}
      \| Su \|_{T^p(X)} \leq \| S_\infty u \|_{T^p(X)} + \| S_0 u \|_{T^p(X)} \lesssim \| u \|_{T^p(X)} .
    \end{equation*}
  \end{proof}
\end{theorem}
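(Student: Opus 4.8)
The plan is to exploit the structure $Su(\cdot,t) = \int_0^\infty K(t,s) u(\cdot,s)\Ds$ by splitting the $s$-integration at $s=t$, writing $S = S_\infty + S_0$, where $S_\infty$ integrates over $s>t$ and $S_0$ over $s<t$. For each piece I would aim for a \emph{pointwise} domination of the square function $\Aa(S_\bullet u)(x)$ by a rapidly decaying superposition of dilated square functions $\Aa_{2^j}u(x)$. Taking $L^p$-norms and invoking change of aperture (Proposition \ref{refl} for $1<p<\infty$ and Proposition \ref{changeofaperture} for $p=1$) should then reduce everything to a convergent geometric series in $\| \Aa u \|_{L^p} = \| u \|_{T^p(X)}$.

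For $S_\infty$ I would fix the cone vertex $x$ and decompose the spatial variable into the annuli $C_k(x,s) = B(x,2^{k+1}s)\setminus B(x,2^k s)$, so that the $k$-th piece $u_k$ sees mass at distance $\sim 2^k s$ from $x$. Since the relevant square function is a $\gamma$-norm, I would first pass to a scalar estimate by \emph{Covariance domination}: it suffices to bound $\iint_{\Gamma(x)} |\la u_k,\xi^*\ra|^2 \, \frac{\textup{d}\mu(y)\D t}{tV(y,t)}$ uniformly in $\xi^*\in X^*$, writing $\hat u = \la u,\xi^*\ra$. For this scalar integral the off-diagonal bound \eqref{OD} applies with the factor $(t/s)^\alpha$ (since $s>t$) and the distance gain $2^{-k\gamma}$; a H\"older split in the $s$-integral, together with the doubling comparisons $V(y,t)\eqsim V(x,t)$ on the cone and $\frac{1}{V(x,s)}\lesssim 2^{n_0 k}\frac{1}{V(y,s)}$ for $y\in B(x,2^{k+1}s)$, should yield $\Aa(S_\infty u)(x) \lesssim \sum_{k\geq 0} 2^{-k(\gamma-n_0/2)}\Aa_{2^{k+1}}u(x)$. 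The hypothesis $\alpha>n$ is exactly what makes the auxiliary integral $\int_0^s (t/s)^{2(\alpha-\varepsilon)}\frac{\textup{d}t}{tV(x,t)}$ converge after dyadic summation.

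The operator $S_0$ is handled analogously but with an extra layer. Now $s<t$, so \eqref{OD} contributes the factor $(s/t)^\beta$, and I would additionally split the scales $s\in(2^{-(m+1)}t,2^{-m}t)$ to extract a gain $2^{-m\beta}$. Combining the annular decomposition in $k$ with this scale decomposition in $m$ should give $\Aa(S_0 u)(x) \lesssim \sum_{k,m\geq 0} 2^{-k(\gamma-n_0/2)}2^{-m\beta}\Aa_{2^{k+m+2}}u(x)$, where the aperture $2^{k+m+2}$ records that mass at distance $\sim 2^k t$ and scale $\sim 2^{-m}t$ lies inside a cone of that width.

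Taking $L^p$-norms and applying change of aperture, each term $\| \Aa_{2^j}u \|_{L^p}$ costs a factor $2^{j(n+\varepsilon)}$, with the $\varepsilon$ present only at $p=1$. The two series therefore converge provided $\gamma-n_0/2-(n+\varepsilon)>0$ and $\beta-(n+\varepsilon)>0$; since $0\leq n_0\leq n$, the hypotheses $\gamma>3n/2$ and $\beta>n$ leave room to choose $\varepsilon$ small. The main obstacle is precisely this bookkeeping at the endpoint $p=1$: the change-of-aperture constant is only $\alpha^{n+\varepsilon}$ rather than $\alpha^n$, so one must verify that the decay threshold $3n/2$ (namely $n$ for the aperture widening plus $n/2\geq n_0/2$ for the volume mismatch across annuli) genuinely defeats the geometric growth coming from the widening cones.
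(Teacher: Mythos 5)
Your proposal follows the paper's proof essentially verbatim: the same splitting $S=S_\infty+S_0$ at $s=t$, the same annular decomposition $C_k(x,s)$ combined with covariance domination to reduce to scalar estimates, the same scale decomposition in $m$ for $S_0$, the same pointwise bounds $\sum_k 2^{-k(\gamma-n_0/2)}\Aa_{2^{k+1}}u$ and $\sum_{k,m}2^{-k(\gamma-n_0/2)}2^{-m\beta}\Aa_{2^{k+m+2}}u$, and the same final change-of-aperture bookkeeping exploiting $\gamma>3n/2$, $\beta>n$ and $n_0\leq n$. The argument is correct and there is nothing to add.
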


\section{Hardy spaces}

We make the following assumptions:

\begin{itemize}
\item Let $(M,d,\mu)$ be a complete doubling metric measure space and assume that it has the cone covering property.
\item
Let $L$ be a non-negative self-adjoint operator on $L^2(M)$ and assume that it generates an analytic semigroup
$(e^{-tL})_{t>0}$, which satisfies the following 
\emph{off-diagonal estimates}: 
There exists a constant $c$ such that
for every $t > 0$ we have
\begin{equation*}
  \| 1_{E'} e^{-tL} (1_Ef) \|_{L^2} \lesssim \exp \Big( -\frac{d(E,E')^2}{ct} \Big) \| 1_Ef \|_{L^2}
\end{equation*}
whenever $E,E'\subset M$ and $f\in L^2(M)$. Sets $E$ and $E'$ in such
a context are assumed, without separate mention, to be measurable.
Denote by $\dom (L)$ and $\ran (L)$ the domain and the range of $L$ on $L^2(M)$.
\item Let $X$ be a UMD space.
\end{itemize}

Recall that on a complete (connected) Riemannian manifold with non-negative sectional curvature the volume measure is doubling with respect
to the geodesic distance. Moreover, the Laplace--Beltrami operator on such a space 
satisfies the off-diagonal estimates, regardless of
curvature. See \cite[Section 1]{AMR} and \cite[Section 3.1]{HOFMANNHARDY} for further discussion and references.

\subsection{Definition and basic properties}
We now define the Hardy spaces and express the conical square function in terms of the tent space norm:
\begin{def*}
  Let $1\leq p < \infty$ and let $N$ be a positive integer. 
  The Hardy space $H^p_{L,N}(X)$ associated with $L$ is defined as the completion of
  $\overline{\ran (L)} \otimes X$ with respect to
  \begin{equation*}
    \| f \|_{H^p_{L,N}(X)} := \| Q_N f \|_{T^p(X)} , \quad \textup{where} \quad Q_Nf(y,t) = (t^2L)^N e^{-t^2L}f(y) ,
    \quad f\in \overline{\ran (L)} \otimes X .
  \end{equation*}
\end{def*}

\begin{remark}
  Note that by the scalar-valued theory (see \cite[Section 4.1]{HOFMANNHARDY}), 
  $Q_Nf \in T^2 \otimes X$ whenever $f\in\overline{\ran (L)} \otimes X$.
\end{remark}

Recall the Calder\'{o}n reproducing formula (the proof of which follows by spectral theory):
For every positive integer $N$ there exists a constant $c$ such that
\begin{equation*}
  f = c \int_0^\infty (t^2L)^{2N} e^{-2t^2L} f \Dt
\end{equation*}
whenever $f\in \overline{\ran (L)} \otimes X$.

We now define, for each positive integer $N$, the mapping
\begin{equation*}
  \pi_N u = \int_0^\infty (t^2L)^N e^{-t^2L}u(\cdot , t) \Dt , \quad u\in T^2 \otimes X ,
\end{equation*}
with which the reproducing formula can be written as $f = c \pi_N Q_Nf$. Here the integral is understood as a limit
in $L^2$ of the integrals $\int_\varepsilon^R$ as $\varepsilon \to 0$ and $R\to\infty$. In what follows, Fubini's theorem
applied to this integral is interpreted by first considering the finite integrals $\int_\varepsilon^R$ and then using
Lebesgue's dominated convergence to pass to the limit.

Note that $Q_N$ and $\pi_N$ are formally adjoint in the sense that for $f\in\overline{\ran (L)} \otimes X$ and 
$v\in T^2 \otimes X^*$ we have
\begin{align*}
  \la Q_Nf , v \ra &= \int_0^\infty \int_M \la (t^2L)^N e^{-t^2L}f(\cdot ) , v(\cdot , t) \ra \D\mu \Dt \\
  &= \int_0^\infty \int_M \la f(\cdot) , (t^2L)^N e^{-t^2L}v(\cdot ,t) \ra \D\mu \Dt \\
  &= \int_M \la f(\cdot ) , \int_0^\infty (t^2L)^N e^{-t^2L}v(\cdot ,t) \Dt \ra \D\mu \\
  &= \la f , \pi_N v \ra . 
\end{align*}

In order to make use of Theorem \ref{intop} in proving, 
for instance, the boundedness of $\pi_N$ from $T^p(X)$ to $H^p_L(X)$ (and the boundedness of the 
$H^\infty$-functional calculus of $L$ on $H^p_L(X)$) we need some off-diagonal estimates of the form \eqref{OD}
for the kernels of our integral operators. There is an abundance of such estimates in the literature and a 
suitable version of Lemma \ref{offdiaglemma} could be obtained directly from sophisticated results like
\cite[Lemma 2.40]{SECONDORDER}. However, taking into account the simplicity of our situation,
we can afford to give some indication of the proof. The first off-diagonal estimate in the following lemma can be found, 
for instance, in \cite[Proposition 3.1]{HOFMANNHARDY}. The second estimate, which is a special case of 
\cite[Lemma 2.28]{SECONDORDER}, contains the heart of the functional calculus in the sense that there and only there
the holomorphicity of $\phi$ is put to use. Note that when $\phi$ is a bounded holomorphic function in a sector
$\{ \zeta\in\C\setminus \{ 0 \} : |\arg \zeta | < \sigma \}$ we can define $\phi (L)f$ by spectral theory for all
$f\in\overline{\ran (L)} \otimes X$.

\begin{lemma}
\label{HMMoffdiag}
  Let $k$ be a non-negative integer and let $\phi$ be a bounded holomorphic function in a sector.
  For all $E,E'\subset M$ and every $f\in L^2(M)$ we have the exponential off-diagonal estimate
  \begin{equation*}
    \| 1_{E'}(t^2L)^k e^{-t^2L} (1_E f ) \|_{L^2} \lesssim \exp \Big( -\frac{d(E,E')^2}{ct^2} \Big) \| 1_E f \|_{L^2} , 
    \quad t > 0 ,
  \end{equation*}
  and the polynomial off-diagonal estimate
  \begin{equation*}
    \| 1_{E'} \phi (L) (t^2L)^k e^{-t^2L} (1_E f ) \|_{L^2} \lesssim \| \phi \|_\infty 
    \Big( 1 + \frac{d(E,E')^2}{t^2} \Big)^{-k} \| 1_E f \|_{L^2} , \quad t > 0.
  \end{equation*}
\end{lemma}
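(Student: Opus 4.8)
The plan is to establish both estimates by reducing to the single off-diagonal bound for the semigroup $e^{-tL}$ that we have assumed, namely $\|1_{E'}e^{-tL}(1_E f)\|_{L^2}\lesssim \exp(-d(E,E')^2/(ct))\|1_Ef\|_{L^2}$, combined with the spectral calculus available for the non-negative self-adjoint operator $L$. The exponential estimate should come from semigroup composition and the elementary bound $\sup_{\lambda\ge 0}\lambda^k e^{-\lambda}\lesssim_k 1$; the polynomial estimate will require genuinely analytic input, so I treat them separately.

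\medskip
\emph{The exponential estimate.}
First I would write $(t^2L)^k e^{-t^2L}=\big((s^2L)^k e^{-s^2 L/2}\big)\,e^{-s^2L/2}$ with a suitable reparametrisation, or more simply factor $e^{-t^2L}=e^{-t^2L/2}e^{-t^2L/2}$ and absorb the polynomial prefactor $(t^2 L)^k$ into the first half-semigroup. The point is that $\lambda\mapsto \lambda^k e^{-\lambda/2}$ is a bounded function on $[0,\infty)$, so by the spectral theorem $(t^2L)^k e^{-t^2L/2}$ is bounded on $L^2(M)$ with operator norm $\lesssim_k 1$, uniformly in $t$. Applying this bounded operator after the half-semigroup $e^{-t^2L/2}$, whose off-diagonal decay is exactly the assumed Gaussian bound with $t^2/2$ in place of $t$, yields
\begin{equation*}
  \|1_{E'}(t^2L)^k e^{-t^2L}(1_Ef)\|_{L^2}
  \lesssim_k \|1_{E'}e^{-t^2L/2}(1_Ef)\|_{L^2}
  \lesssim \exp\Big(-\frac{d(E,E')^2}{ct^2}\Big)\|1_Ef\|_{L^2},
\end{equation*}
after adjusting the constant $c$. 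Some care is needed because the bounded operator $(t^2L)^k e^{-t^2L/2}$ need not respect the supports $E,E'$; the clean route is to split $M$ into the region near $E$ and its complement and use the Gaussian tail of the half-semigroup to control the cross terms, as in the standard composition argument for off-diagonal estimates.

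\medskip
\emph{The polynomial estimate.}
This is the step I expect to be the main obstacle, since here the holomorphy of $\phi$ must be exploited to convert exponential-in-$t$ behaviour into the stated polynomial decay $(1+d(E,E')^2/t^2)^{-k}$. The natural approach is to represent $\phi(L)(t^2L)^k e^{-t^2L}$ through a contour integral of the resolvent using the $H^\infty$-functional calculus for the sectorial operator $L$: writing $\phi(L)=\frac{1}{2\pi i}\int_{\partial\Sigma}\phi(\zeta)(\zeta-L)^{-1}\,d\zeta$ over the boundary of a sector, one obtains off-diagonal bounds for the resolvent $(\zeta-L)^{-1}$ from the semigroup estimates (via the Laplace transform representation) and then integrates against $\phi$. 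The polynomial rate of order $k$ emerges by matching the $k$-fold factor $(t^2L)^k$ against $k$ integrations by parts in the contour integral, each gaining one power of $d(E,E')^{-2}t^2$ while the holomorphic $\phi$ keeps the integral convergent; this is precisely the mechanism in \cite[Lemma 2.28]{SECONDORDER}. Since the lemma explicitly credits that reference for the second estimate and asks only for an indication of the proof, I would state this reduction and defer the detailed contour estimates, noting that the uniform bound $\|\phi\|_\infty$ controls the size of the contour integral throughout.
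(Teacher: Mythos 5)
The paper does not actually prove this lemma; in the paragraph preceding it, it records that the exponential estimate can be found in \cite[Proposition 3.1]{HOFMANNHARDY} and that the polynomial estimate is a special case of \cite[Lemma 2.28]{SECONDORDER}. Your sketch of the \emph{polynomial} estimate is consistent with the mechanism of that cited result (contour representation of $\phi(L)$, off-diagonal resolvent bounds derived from the semigroup, $k$ powers of decay traded against the factor $(t^2L)^k$), so there is nothing to object to there beyond the fact that you, like the paper, defer the details to the reference.

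Your argument for the \emph{exponential} estimate, however, contains a genuine gap. With $F$ a $d(E,E')/2$-neighbourhood of $E$, the splitting you propose produces two terms. In $1_{E'}\,(t^2L)^k e^{-t^2L/2}\,1_{F^c}\,e^{-t^2L/2}\,1_E$ the Gaussian tail of the half-semigroup does the work and mere $L^2$-boundedness of $(t^2L)^k e^{-t^2L/2}$ suffices. But in the complementary term $1_{E'}\,(t^2L)^k e^{-t^2L/2}\,1_{F}\,e^{-t^2L/2}\,1_E$ the inner factor $1_F e^{-t^2L/2}1_E$ carries no decay (since $F$ is adjacent to $E$), and you would need $\| 1_{E'}(t^2L)^k e^{-t^2L/2} 1_F \|_{L^2\to L^2}$ to be exponentially small because $d(E',F)\gtrsim d(E,E')$ --- which is exactly the off-diagonal estimate for $(t^2L)^k e^{-t^2L/2}$ that you set out to prove. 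Spectral calculus only gives the uniform operator-norm bound, not localization; the standard composition lemma for off-diagonal estimates requires such bounds for \emph{both} factors, so your reduction is circular. The way out, and the route taken in \cite[Proposition 3.1]{HOFMANNHARDY}, is to first extend the assumed Davies--Gaffney bound to complex times $z$ in a sector (by applying the Phragm\'{e}n--Lindel\"{o}f principle to the bounded analytic function $z\mapsto \langle e^{-zL}(1_Ef), 1_{E'}g\rangle$ on a half-plane, interpolating between the Gaussian decay on the positive real axis and the trivial bound elsewhere) and then to produce $(t^2L)^k e^{-t^2L}$ from Cauchy's integral formula for the $k$-th derivative of $z\mapsto e^{-zL}$ on a circle of radius comparable to $t^2$ about $z=t^2$; this preserves the Gaussian factor after enlarging the constant $c$.
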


\begin{lemma}
\label{offdiaglemma}
  Let $N,N' \geq 1$ and let $\phi$ be a bounded holomorphic function in a sector. Then
  for all $E,E'\subset M$ and every $f\in L^2(M)$ we have
  \begin{align*}
    \| 1_{E'} (t^2L)^N e^{-t^2L} \phi (L) (s^2L)^{N'} e^{-s^2L} (1_E f) \|_{L^2} \\ 
    \lesssim \| \phi \|_\infty
    &\min \Big( \frac{t^{2N}}{s^{2N}} , \frac{s^{2N'}}{t^{2N'}} \Big)
    \Big( 1 + \frac{d(E,E')}{\max (t,s)} \Big)^{-2(N+N')} \| 1_Ef \|_{L^2}
  \end{align*}
   whenever $t,s > 0$.
  \begin{proof}
    We make use of the fact that off-diagonal estimates (both exponential and polynomial) are stable under
    compositions in the sense of \cite[Lemma 2.22]{SECONDORDER} and \cite[Lemma 6.2]{MORRISREPR}.
    For $t \leq s$ the result follows by writing
    \begin{equation*}
      (t^2L)^N e^{-t^2L} \phi (L) (s^2L)^{N'} e^{-s^2L}
      = \Big( \frac{t}{s} \Big)^{2N} e^{-t^2L} \phi (L) (s^2L)^{N+N'}e^{-s^2L}
    \end{equation*}
    and applying Lemma \ref{HMMoffdiag} separately for $(e^{-t^2L})_{t>0}$ and $(\phi (L)(s^2L)^{N+N'} e^{-s^2L})_{s>0}$.
    Similarly, for $s\leq t$ we write
    \begin{equation*}
      (t^2L)^N e^{-t^2L} \phi (L) (s^2L)^{N'} e^{-s^2L}
      = \Big( \frac{s}{t} \Big)^{2N'} \phi (L) (t^2L)^{N+N'} e^{-t^2L} e^{-s^2L}
    \end{equation*}
    and applying Lemma \ref{HMMoffdiag} for $(\phi (L)(t^2L)^{N+N'} e^{-t^2L})_{t>0}$ and $(e^{-s^2L})_{s>0}$.
  \end{proof}
\end{lemma}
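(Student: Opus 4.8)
The plan is to build the bound out of the two off-diagonal estimates recorded in Lemma \ref{HMMoffdiag}, exploiting that every factor in the product is a function of the single self-adjoint operator $L$ and that such functions commute. The first point is that the scaling factor can simply be extracted by regrouping the powers of $L$. Since $(t^2L)^N = (t/s)^{2N}(s^2L)^N$ and all factors commute, in the case $t\leq s$ one may rewrite
\begin{equation*}
  (t^2L)^N e^{-t^2L}\phi(L)(s^2L)^{N'}e^{-s^2L} = \Big(\frac{t}{s}\Big)^{2N} e^{-t^2L}\,\phi(L)(s^2L)^{N+N'}e^{-s^2L},
\end{equation*}
which already displays the factor $(t/s)^{2N}=\min(t^{2N}/s^{2N},s^{2N'}/t^{2N'})$; symmetrically, for $s\leq t$ one extracts $(s/t)^{2N'}$ and absorbs the remaining powers of $L$ into $(t^2L)^{N+N'}$. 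Because $\phi(L)$ commutes with everything, I would in each case attach it to the factor at the \emph{larger} scale, so that the polynomial estimate of Lemma \ref{HMMoffdiag} becomes applicable there.

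What then remains is a polynomial off-diagonal estimate of order $2(N+N')$ at the scale $\max(t,s)$ for the residual product. For $t\leq s$ this residual product is the composition of $e^{-t^2L}$, which satisfies the exponential off-diagonal estimate at scale $t$, with $\phi(L)(s^2L)^{N+N'}e^{-s^2L}$, which by the second bound of Lemma \ref{HMMoffdiag} (with $k=N+N'$) satisfies the polynomial off-diagonal estimate of order $N+N'$ at scale $s$, with constant $\lesssim\|\phi\|_\infty$. I would then invoke stability of off-diagonal estimates under composition, in the sense of \cite[Lemma 2.22]{SECONDORDER} and \cite[Lemma 6.2]{MORRISREPR}: composing these two families yields off-diagonal decay governed by the larger scale $\max(t,s)=s$ and by the weaker (polynomial) of the two rates. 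Writing the resulting factor as $(1+d(E,E')^2/s^2)^{-(N+N')}$ and using $(1+x^2)\eqsim(1+x)^2$ converts order $N+N'$ in $d/s$-squared into order $2(N+N')$ in $1+d(E,E')/s$, which is exactly the claimed exponent. The case $s\leq t$ is entirely parallel: after extracting $(s/t)^{2N'}$ one composes $\phi(L)(t^2L)^{N+N'}e^{-t^2L}$ (polynomial of order $N+N'$ at scale $t$) with $e^{-s^2L}$ (exponential at scale $s\leq t$), obtaining polynomial decay of order $2(N+N')$ at scale $\max(t,s)=t$. Combining the two cases produces the $\min$ in the scaling prefactor together with the stated decay and a constant proportional to $\|\phi\|_\infty$.

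I expect the only genuine subtlety to lie in the composition step, namely that composing a family with exponential decay at the smaller scale against a family with polynomial decay at the larger scale returns polynomial decay at the larger scale, with the exponential factor absorbed harmlessly. This is precisely what the cited composition lemmas deliver, via the standard device of inserting $1=1_F+1_{F^c}$ for an intermediate set $F$ chosen at distance comparable to $d(E,E')$ and balancing the two pieces; the exponential decay of $e^{-t^2L}$ is far more than enough to dominate the piece in which the intermediate region lies far from $E'$, so the polynomial rate at the larger scale is what survives. Everything else is routine bookkeeping with commuting functions of $L$.
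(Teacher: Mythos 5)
Your proposal is correct and follows essentially the same route as the paper: the same commutativity-based regrouping to extract the factor $(t/s)^{2N}$ or $(s/t)^{2N'}$, the same attachment of $\phi(L)$ to the larger-scale factor so that the polynomial estimate of Lemma \ref{HMMoffdiag} applies there, and the same appeal to composition stability of off-diagonal estimates via the cited lemmas. Your additional remarks on how the composition step works (splitting by an intermediate set and absorbing the exponential decay) merely spell out what the paper delegates to the references.
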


For a real number $\alpha$ we denote by $\lfloor \alpha \rfloor$
the largest integer not greater than $\alpha$.

\begin{prop}
\label{pibounded}
  Let $1\leq p < \infty$. For every $N\geq \lfloor n/2 \rfloor + 1$, 
  $\pi_N$ defines a bounded surjection from $T^p(X)$ onto $H^p_{L,N}(X)$.
  \begin{proof}
    For boundedness it suffices to consider the integral operator
    \begin{equation*}
      Q_N\pi_N u = \int_0^\infty (t^2L)^N e^{-t^2L} (s^2L)^N e^{-s^2L} u(\cdot , s) \Ds ,
    \end{equation*}
    the kernel of which, by Lemma \ref{offdiaglemma}, satisfies 
    the estimate \eqref{OD} with
    $\gamma = 4N > 3n/2$ and $\alpha = \beta = 2N > n$.
    
    Surjectivity follows immediately from the facts that, by definition, 
    $Q_N$ is an isometric embedding (into a complete space),
    and $c\pi_N$ is its continuous left inverse on the dense set $\overline{\ran (L)} \otimes X$.
  \end{proof}  
\end{prop}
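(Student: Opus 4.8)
The plan is to deduce boundedness from the fact that, by the very definition of the Hardy space norm, $\| \pi_N u \|_{H^p_{L,N}(X)} = \| Q_N \pi_N u \|_{T^p(X)}$ whenever $\pi_N u \in \overline{\ran (L)} \otimes X$, which is the case for $u$ in the dense subspace $L^2_c(M^+) \otimes X$ of $T^p(X)$. It therefore suffices to show that the composition $Q_N \pi_N$ is bounded on $T^p(X)$ and then to extend $\pi_N$ by density. Writing out the composition,
\begin{equation*}
  Q_N \pi_N u(\cdot , t) = \int_0^\infty (t^2L)^N e^{-t^2L} (s^2L)^N e^{-s^2L} u(\cdot , s) \Ds ,
\end{equation*}
exhibits it as an integral operator of exactly the type handled by Theorem \ref{intop}, with operator-valued kernel $K(t,s) = (t^2L)^N e^{-t^2L} (s^2L)^N e^{-s^2L}$.

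First I would apply Lemma \ref{offdiaglemma} with $\phi \equiv 1$ and $N' = N$ to see that this kernel obeys the off-diagonal estimate \eqref{OD} with $\alpha = \beta = 2N$ and $\gamma = 2(N+N') = 4N$. The only thing left to verify is that these exponents meet the hypotheses of Theorem \ref{intop}, namely $\alpha , \beta > n$ and $\gamma > 3n/2$. Since $N \geq \lfloor n/2 \rfloor + 1$ forces $N > n/2$, we obtain $\alpha = \beta = 2N > n$ and hence $\gamma = 4N > 2n > 3n/2$, so Theorem \ref{intop} applies and yields $\| Q_N \pi_N u \|_{T^p(X)} \lesssim \| u \|_{T^p(X)}$ for all $1 \leq p < \infty$. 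This is the one place where the assumption on $N$ enters, and it is precisely the integer threshold just above $n/2$.

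For surjectivity I would invoke two complementary facts. By definition of the norm, $Q_N$ is an isometric embedding of $H^p_{L,N}(X)$ into $T^p(X)$, while the Calder\'{o}n reproducing formula $f = c\pi_N Q_N f$ shows that $c\pi_N$ is a continuous left inverse of $Q_N$ on the dense subspace $\overline{\ran (L)} \otimes X$. Consequently, for an arbitrary $f \in H^p_{L,N}(X)$ the element $u = c Q_N f \in T^p(X)$ satisfies $\pi_N u = c\pi_N Q_N f = f$, after passing to the limit along approximants in $\overline{\ran (L)} \otimes X$ and using the continuity of $\pi_N$ together with the isometry of $Q_N$. Hence $\pi_N$ maps onto $H^p_{L,N}(X)$. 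Since both Lemma \ref{offdiaglemma} and Theorem \ref{intop} are already in place, I do not expect a genuine obstacle here: the substance has been front-loaded into those results, and the remaining work is the reduction to $Q_N \pi_N$ together with the elementary bookkeeping of the exponents $\alpha , \beta , \gamma$.
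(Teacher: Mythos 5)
Your proposal is correct and follows essentially the same route as the paper: reduce boundedness to that of the integral operator $Q_N\pi_N$, apply Lemma \ref{offdiaglemma} (with $\phi\equiv 1$, $N'=N$) to get the exponents $\alpha=\beta=2N>n$ and $\gamma=4N>3n/2$ required by Theorem \ref{intop}, and deduce surjectivity from $Q_N$ being an isometric embedding with $c\pi_N$ as its continuous left inverse on the dense subspace $\overline{\ran (L)}\otimes X$. The only difference is that you spell out the bookkeeping the paper leaves implicit.
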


The following theorem is a part of our second main result and can be thought of as an extension of Theorem 7.10 in
\cite{HVNPCONICAL} to the endpoint $p=1$:

\begin{theorem}
\label{main2}
  Let $1\leq p < \infty$. 
  Then 
  \begin{itemize}
  \item $H^p_{L,N}(X) = H^p_{L,N'}(X) =: H^p_L(X)$ whenever 
  $N,N'\geq \lfloor n/2 \rfloor + 1$,
  \item $L$ has a bounded $H^\infty$-functional
  calculus of any angle on $H^p_L(X)$, that is, if $\phi$ is a bounded holomorphic function in a sector, then
  \begin{equation*}
    \| \phi (L) f \|_{H^p_L(X)} \lesssim \| \phi \|_\infty \| f \|_{H^p_L(X)}
  \end{equation*}
  for all $f\in \overline{\ran (L)} \otimes X$.
  \end{itemize}
  \begin{proof}
    Assume that $\phi$ is a bounded holomorphic function in a sector.
    We use the reproducing formula to write
    \begin{align*}
      Q_N\phi (L) f(\cdot , t) &= (t^2L)^N e^{-t^2L}\phi (L)f \\
      &= c \int_0^\infty (t^2L)^N e^{-t^2L}\phi (L) (s^2L)^{2N'}e^{-2s^2L}f \Ds \\
      &= \int_0^\infty K(t,s) Q_{N'}f (\cdot , s) \Ds .
    \end{align*}    
    By Lemma \ref{offdiaglemma} the kernel
    \begin{equation*}
      K(t,s) = c (t^2L)^Ne^{-t^2L} \phi (L) (s^2L)^{N'}e^{-s^2L}
    \end{equation*}
    satisfies estimate \eqref{OD} with parameters
    $\gamma > 3n/2$ and $\alpha, \beta > n$ and    
    a constant depending on $\| \phi \|_\infty$.
    
    The first statement follows by considering $\phi$ identically one.
  \end{proof}
\end{theorem}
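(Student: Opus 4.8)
The plan is to transfer the statement entirely to the tent space side: realize $Q_N\circ\phi(L)$ as one of the integral operators covered by Theorem \ref{intop}, and then read everything off through the defining isometry $Q_N\colon H^p_{L,N}(X)\hookrightarrow T^p(X)$. First I would apply the Calder\'{o}n reproducing formula to $f\in\overline{\ran(L)}\otimes X$, writing $f = c\int_0^\infty (s^2L)^{2N'}e^{-2s^2L}f\Ds$, and then act underneath the integral with the bounded operator $(t^2L)^N e^{-t^2L}\phi(L)$. Factoring $(s^2L)^{2N'}e^{-2s^2L}f = (s^2L)^{N'}e^{-s^2L}Q_{N'}f(\cdot,s)$ exhibits the integrand as $K(t,s)Q_{N'}f(\cdot,s)$, so that
\[
  Q_N\phi(L)f(\cdot , t) = \int_0^\infty K(t,s) Q_{N'}f(\cdot , s) \Ds , \qquad K(t,s) = c(t^2L)^N e^{-t^2L}\phi (L)(s^2L)^{N'}e^{-s^2L}.
\]
In other words, writing $S$ for the integral operator with kernel $K$, one has the operator identity $Q_N\circ\phi(L) = S\circ Q_{N'}$ on $\overline{\ran(L)}\otimes X$.

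Next I would feed $K$ into Lemma \ref{offdiaglemma}, which produces the off-diagonal estimate \eqref{OD} with $\alpha = 2N$, $\beta = 2N'$, $\gamma = 2(N+N')$ and implied constant $\lesssim\|\phi\|_\infty$. Since $N,N'\geq\lfloor n/2\rfloor + 1 > n/2$, these parameters satisfy $\alpha,\beta > n$ and $\gamma = 2(N+N') > 2n > 3n/2$, which are exactly the hypotheses of Theorem \ref{intop}. Hence $S$ is bounded on $T^p(X)$ for every $1\leq p<\infty$ with norm $\lesssim\|\phi\|_\infty$, and applying the isometries $Q_N$, $Q_{N'}$ gives
\[
  \|\phi(L)f\|_{H^p_{L,N}(X)} = \|Q_N\phi(L)f\|_{T^p(X)} = \|SQ_{N'}f\|_{T^p(X)} \lesssim \|\phi\|_\infty \|Q_{N'}f\|_{T^p(X)} = \|\phi\|_\infty\|f\|_{H^p_{L,N'}(X)}.
\]

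To settle the first bullet, specialize to $\phi\equiv 1$, for which $\phi(L)$ is the identity on $\overline{\ran(L)}\otimes X$; the displayed bound then reads $\|f\|_{H^p_{L,N}(X)}\lesssim\|f\|_{H^p_{L,N'}(X)}$, and since $N$ and $N'$ play symmetric roles (both exceeding $\lfloor n/2\rfloor + 1$) the reverse inequality follows by interchanging them. Thus the two norms are equivalent on the common dense subspace $\overline{\ran(L)}\otimes X$, their completions coincide, and we may define $H^p_L(X)$ unambiguously. Reading the displayed estimate with this identification (for instance with $N=N'$) then yields the asserted bounded $H^\infty$-functional calculus.

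The genuinely delicate point, once the hard analysis has been outsourced to Lemma \ref{offdiaglemma} and Theorem \ref{intop}, is justifying the operator identity $Q_N\phi(L) = SQ_{N'}$ and the subsequent norm reading. I would interpret the reproducing-formula integral as the stated $L^2$-limit of the truncations $\int_\varepsilon^R$, push the bounded operator $(t^2L)^N e^{-t^2L}\phi(L)$ through this limit, and invoke Fubini together with dominated convergence exactly as sanctioned in the discussion preceding Lemma \ref{HMMoffdiag}. Since $Q_{N'}f\in T^2\otimes X$ need not be compactly supported, a short density approximation is needed so that the bounded extension of $S$ to $T^p(X)$ may legitimately be applied to it and the equality $Q_N\phi(L)f = SQ_{N'}f$, which holds in $T^2\otimes X$, is inherited in $T^p(X)$.
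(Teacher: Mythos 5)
Your proposal is correct and follows essentially the same route as the paper: the reproducing formula realizes $Q_N\phi(L)$ as the integral operator with kernel $K(t,s)=c(t^2L)^Ne^{-t^2L}\phi(L)(s^2L)^{N'}e^{-s^2L}$ acting on $Q_{N'}f$, Lemma \ref{offdiaglemma} supplies the off-diagonal estimate \eqref{OD} with $\alpha=2N$, $\beta=2N'$, $\gamma=2(N+N')$, and Theorem \ref{intop} then gives boundedness on $T^p(X)$, with the case $\phi\equiv 1$ yielding the independence of $N$. Your additional remarks on justifying the truncated integrals and the density approximation are consistent with the conventions the paper sets out before Lemma \ref{HMMoffdiag}.
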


\begin{prop}
\label{hardyduality}
  Let $1 < p < \infty$. Then $H^p_L(X)^* \simeq H^{p'}_L(X^*)$ and the duality is realized via
  \begin{equation*}
    \la f , g \ra = \int_M \la f(x) , g(x) \ra \D\mu (x) , \quad f\in\overline{\ran (L)} \otimes X , 
    \quad g\in\overline{\ran (L)} \otimes X^* .
  \end{equation*}
  \begin{proof}
    Fix an $N\geq \lfloor n/2 \rfloor + 1$ and abbreviate $Q$ and $\pi$ for $Q_N$ and $\pi_N$. The pairing in the statement arises 
    from the identification of $H^p_L(X)$ as the complemented subspace
    $QH^p_L(X) = Q\pi T^p(X)$ of $T^p(X)$. The projection $Q\pi$ on $T^p(X)$ has the adjoint
    $(Q\pi)^* = \pi^* Q^* = Q\pi$ on $T^p(X)^* \simeq T^{p'}(X^*)$ and therefore
    \begin{equation*}
      H^p_L(X)^* \simeq (Q\pi T^p(X))^* \simeq Q\pi T^{p'}(X^*) \simeq H^{p'}_L(X^*).
    \end{equation*}
  \end{proof}
\end{prop}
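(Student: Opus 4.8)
The plan is to realize $H^p_L(X)$ as a complemented subspace of $T^p(X)$ and then transfer the tent space duality of Proposition \ref{refl} through the associated projection. First I would fix an integer $N \geq \lfloor n/2 \rfloor + 1$ and abbreviate $Q = Q_N$ and $\pi = \pi_N$. By the definition of the Hardy space norm, $Q$ embeds $H^p_L(X)$ isometrically into $T^p(X)$, while Proposition \ref{pibounded} shows that $\pi$ is a bounded surjection $T^p(X) \to H^p_L(X)$. The Calder\'on reproducing formula gives $c\pi Q = \textrm{id}$ on $\overline{\ran (L)} \otimes X$, hence on all of $H^p_L(X)$ by density; consequently $P := cQ\pi$ is a bounded idempotent on $T^p(X)$ whose range is exactly $QH^p_L(X)$. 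This identifies $H^p_L(X)$ (via $Q$) with the complemented subspace $PT^p(X)$ of $T^p(X)$.

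Next I would invoke the general principle that for a bounded projection $P$ on a Banach space $V$, the dual of the range $PV$ is canonically isomorphic to the range of the adjoint projection $P^*$ on $V^*$. Taking $V = T^p(X)$, Proposition \ref{refl} supplies $V^* \simeq T^{p'}(X^*)$ in the reflexive range $1 < p < \infty$, with pairing given by the integral over $M^+$ against $\frac{\D\mu \D t}{t}$. The crucial computation is then to identify $P^*$. Using the formal adjointness relation $\la Qf , v \ra = \la f , \pi v \ra$ established before the statement --- extended from the dense subspaces to the full spaces by boundedness --- one reads off that $Q^* = \pi$ and $\pi^* = Q$ under the tent space pairing. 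Therefore $P^* = c\pi^* Q^* = cQ\pi$ is again of the same form, now acting on $T^{p'}(X^*)$, and its range is $Q H^{p'}_L(X^*)$ by Proposition \ref{pibounded} applied to the UMD space $X^*$.

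Putting these together yields the chain
\begin{equation*}
  H^p_L(X)^* \simeq (PT^p(X))^* \simeq P^* T^{p'}(X^*) = Q\pi T^{p'}(X^*) = Q H^{p'}_L(X^*) \simeq H^{p'}_L(X^*) .
\end{equation*}
It then remains to check that the pairing takes the asserted form. For $f \in \overline{\ran (L)} \otimes X$ and $g \in \overline{\ran (L)} \otimes X^*$, the tent space pairing of their images unwinds via formal adjointness and the reproducing formula as $\la Qf , Qg \ra = \la f , \pi Q g \ra = c^{-1} \la f , g \ra$, where $\la f , g \ra = \int_M \la f(x) , g(x) \ra \D\mu (x)$; the harmless constant $c^{-1}$ is absorbed into the isomorphism.

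The main obstacle I expect is the bookkeeping around densities and completions in the middle step: $Q$ and $\pi$ are initially defined only on dense subspaces, and the formal adjointness $\la Qf , v \ra = \la f , \pi v \ra$ is stated for $f \in \overline{\ran (L)} \otimes X$ and $v \in T^2 \otimes X^*$. One must verify that this relation persists after extending $Q$, $\pi$ boundedly to the $T^p$/$H^p$ spaces and their duals, and that the operator $cQ\pi$ on $T^{p'}(X^*)$ is genuinely the adjoint $P^*$ of $P$ rather than merely a formally similar operator. Once the two projections are seen to be honest adjoints of one another, the remainder is the standard duality of complemented subspaces.
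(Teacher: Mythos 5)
Your proposal is correct and follows essentially the same route as the paper: realizing $H^p_L(X)$ as the complemented subspace $Q\pi T^p(X)$ of $T^p(X)$, identifying the adjoint projection $(Q\pi)^* = \pi^* Q^* = Q\pi$ on $T^{p'}(X^*)$ via the formal adjointness of $Q$ and $\pi$, and invoking duality of complemented subspaces. The additional care you take with densities and with unwinding the pairing is exactly the bookkeeping the paper leaves implicit.
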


\begin{remark}
  From Theorem \ref{endpointduality} it follows that bounded linear functionals on $H^1_L(X)$ are of the form
  $f\mapsto \la Qf , v \ra$, where $v\in T^\infty (X^*)$. We will not attempt to describe $H^1_L(X)^*$ as a space of
  functions on $M$.
\end{remark}

The other part of our second main result extends the complex interpolation scale of vector-valued 
Hardy spaces to the endpoint $p=1$ (cf. Corollary 7.2 in \cite{HVNPCONICAL}):

\begin{theorem}
\label{main1}
  Let $1\leq p_0 \leq p_1 < \infty$. Then
  \begin{equation*}
    [QH^{p_0}_L (X),QH^{p_1}_L(X)]_\theta = QH^p_L(X) , \quad \text{where}\quad 
    \frac{1}{p} = \frac{1-\theta}{p_0} + \frac{\theta}{p_1} .
  \end{equation*}
  \begin{proof}
    This follows from interpolation of tent spaces (Corollary \ref{tentmain1}) 
    along with boundedness of the projection $Q\pi$ (Proposition \ref{pibounded} and the proof of Proposition \ref{hardyduality}) by means of interpolation of complemented subspaces
    (see \cite[Corollary 7.2]{HVNPCONICAL} and the references therein). 
  \end{proof}
\end{theorem}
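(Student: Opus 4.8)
The plan is to realize each Hardy space as the image of a single bounded idempotent acting on the corresponding tent space, and then to invoke interpolation of complemented subspaces, exactly along the lines of \cite[Corollary 7.2]{HVNPCONICAL}. Fix $N \geq \lfloor n/2 \rfloor + 1$ and abbreviate $Q = Q_N$ and $\pi = \pi_N$. The Calder\'on reproducing formula gives $c\pi Q = \mathrm{id}$ on $\overline{\ran (L)} \otimes X$, so the operator $P := cQ\pi$ is idempotent: regrouping the scalar and the reproducing identity yields $P^2 = cQ(c\pi Q)\pi = cQ\pi = P$. Its range is precisely $Q\pi T^p(X) = QH^p_L(X)$, the concrete complemented copy of the Hardy space sitting inside $T^p(X)$ that was already used in the proof of Proposition \ref{hardyduality}.

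Next I would record that $P$ is bounded on $T^p(X)$ for every $1\leq p < \infty$, simultaneously and via the same formula. This is immediate from Proposition \ref{pibounded}, which furnishes boundedness of $\pi \colon T^p(X) \to H^p_L(X)$, together with the fact that $Q \colon H^p_L(X) \to T^p(X)$ is by definition an isometric embedding. Since $Q$ and $\pi$ are fixed integral operators not depending on $p$, the single densely defined operator $P$ restricts to a bounded projection on each member of the couple $(T^{p_0}(X), T^{p_1}(X))$, and hence on their sum; this is exactly the data required to treat the ranges as a compatible interpolation couple.

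I would then apply Corollary \ref{tentmain1}, which gives $[T^{p_0}(X), T^{p_1}(X)]_\theta = T^p(X)$ for the exponents in the statement. Because $P$ is a bounded projection on the whole couple, the complex interpolation functor commutes with it: the images form a compatible couple and one has $[PT^{p_0}(X), PT^{p_1}(X)]_\theta = P\,[T^{p_0}(X), T^{p_1}(X)]_\theta = PT^p(X)$. Rewriting in terms of Hardy spaces yields $[QH^{p_0}_L(X), QH^{p_1}_L(X)]_\theta = QH^p_L(X)$, as claimed.

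The only delicate point, and the one I would take most care over, is the passage to the endpoint $p_0 = 1$. For $1 < p_0 \leq p_1 < \infty$ everything is classical, but the inclusion of $p=1$ in the tent-space interpolation scale is exactly what Corollary \ref{tentmain1} (through Theorem \ref{interpolationendpoint}) supplies, and this in turn rests on the cone covering property of $M$. Once that endpoint result for tent spaces is in hand, the retraction argument is purely formal and no genuinely new difficulty arises for the Hardy spaces beyond checking that $P$ remains a bounded projection on $T^1(X)$ — which again follows from Proposition \ref{pibounded} with $p=1$.
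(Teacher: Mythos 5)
Your proposal is correct and follows essentially the same route as the paper: the paper's proof likewise identifies $QH^p_L(X)$ as the range of the bounded projection $Q\pi$ on $T^p(X)$ (Proposition \ref{pibounded} together with the proof of Proposition \ref{hardyduality}) and then combines Corollary \ref{tentmain1} with interpolation of complemented subspaces. Your write-up merely makes explicit the idempotency of $cQ\pi$ via the reproducing formula and the consistency of the projection across the couple, both of which the paper leaves implicit.
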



\subsection{Atoms}

In order to transfer the atomic decomposition from $T^1(X)$ to $H^1_L(X)$ we proceed as in 
\cite[Subsection 4.3]{HOFMANNHARDY}. Relying on the self-adjointness of $L$ we may define, as in
\cite[Lemmas 3.5 and 4.11]{HOFMANNHARDY}\footnote{More precisely, we put $\Phi_t = \widehat{\phi}(t\sqrt{L})$, where $\phi$ is smooth and compactly supported around $0$ in $\R$. The desired properties are expressed in equations (4.21) and (3.12) in \cite{HOFMANNHARDY}.},
a family $(\Phi_t)_{t>0}$ uniformly bounded operators on $L^2(M)$ such that
\begin{itemize}
\item
for all positive integers $N, N'$ there exists a constant $c$ such that
\begin{equation*}
  f = c \int_0^\infty (t^2L)^{N+N'} \Phi_t e^{-t^2L}f \Dt , \quad f\in \overline{\ran (L)} \otimes X ,
\end{equation*}
\item
for all non-negative integers $k$ the family $((t^2L)^k \Phi_t)_{t>0}$ of bounded operators on $L^2(M)$
has finite speed of propagation in the sense that if $t\leq d(E,E')$
for some $E,E'\subset M$, then
$1_{E'}(t^2L)^k \Phi_t (1_Ef) = 0$ whenever $f\in L^2(M)$.
\end{itemize}

We now define the operators
\begin{equation*}
  \widetilde{Q}_N f (y,t) = (t^2L)^N \Phi_t f(y) , \quad f\in \overline{\ran (L)} \otimes X ,
\end{equation*}
and
\begin{equation*}
  \widetilde{\pi}_{N'}u = \int_0^\infty (t^2L)^{N'} \Phi_t u(\cdot , t) \Dt , \quad u\in T^2 \otimes X ,
\end{equation*}
with which the new reproducing formula can be written as 
$f = c \pi_{N'} \widetilde{Q}_Nf = c\widetilde{\pi}_{N'}Q_Nf$.

\begin{prop}
\label{pivarphi}
  Let $1\leq p < \infty $. The operators $\widetilde{Q}_N : H^p_L(X) \to T^p(X)$ and 
  $\widetilde{\pi}_N : T^p(X) \to H^p_L(X)$ are bounded whenever 
  $N\geq \lfloor n/2 \rfloor + 1$.
  \begin{proof}
    Again, it suffices to view $\widetilde{Q}_N$ and $\widetilde{\pi}_N$ as integral operators.
    Indeed,
    \begin{equation*}
      \widetilde{Q}_N f (\cdot , t) = \widetilde{Q}_N \pi_N Q_N f (\cdot , t)
      = c \int_0^\infty (t^2L)^N \Phi_t (s^2L)^N e^{-s^2L} Q_Nf(\cdot , s) \Ds
    \end{equation*}
    and
    \begin{equation*}
      Q_N\widetilde{\pi}_Nu = c \int_0^\infty (t^2L)^Ne^{-t^2L} (s^2L)^N \Phi_s u(\cdot , s) \Ds .
    \end{equation*}
    To see that the kernels of these integral operators satisfy \eqref{OD} one argues as in Lemma
    \ref{offdiaglemma} with $(t^2L)^N \Phi_t$ replacing $(t^2L)^N e^{-t^2L}$. Note that
    the exponential off-diagonal estimates are immediate from the fact that
    $1_{E'}(t^2L)^k \Phi_t(1_Ef) = 0$ when $t\leq d(E,E')$.
  \end{proof}
\end{prop}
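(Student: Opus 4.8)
The plan is to realize both operators as integral operators of the type handled by Theorem \ref{intop} and then transfer their boundedness through the isometric embedding $Q_N$. Recall that $\|g\|_{H^p_L(X)} = \|Q_N g\|_{T^p(X)}$ by definition, so $Q_N$ embeds the Hardy space isometrically into $T^p(X)$. Consequently it suffices to bound $\widetilde{Q}_N$, which already lands in $T^p(X)$, and the composition $Q_N\widetilde{\pi}_N$ on $T^p(X)$, since $\|\widetilde{\pi}_N u\|_{H^p_L(X)} = \|Q_N\widetilde{\pi}_N u\|_{T^p(X)}$.

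First I would insert the reproducing formula $f = c\pi_N Q_N f$ (valid on $\overline{\ran (L)}\otimes X$) to rewrite $\widetilde{Q}_N$ as
\begin{equation*}
  \widetilde{Q}_N f(\cdot , t) = (t^2L)^N\Phi_t f
  = c\int_0^\infty (t^2L)^N\Phi_t (s^2L)^N e^{-s^2L} Q_N f(\cdot , s)\Ds ,
\end{equation*}
so that $\widetilde{Q}_N f = S(Q_N f)$ for the integral operator $S$ with kernel $K(t,s) = c(t^2L)^N\Phi_t (s^2L)^N e^{-s^2L}$. Symmetrically,
\begin{equation*}
  Q_N\widetilde{\pi}_N u = c\int_0^\infty (t^2L)^N e^{-t^2L}(s^2L)^N\Phi_s\, u(\cdot , s)\Ds ,
\end{equation*}
exhibiting $Q_N\widetilde{\pi}_N$ as the integral operator whose kernel is obtained from $K$ by interchanging the roles of $\Phi$ and the heat semigroup.

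The crux is to verify that these kernels satisfy the off-diagonal estimate \eqref{OD} with $\gamma > 3n/2$ and $\alpha , \beta > n$, and this is the step I expect to be the main obstacle. I would mimic the proof of Lemma \ref{offdiaglemma}, replacing the factor $(t^2L)^N e^{-t^2L}$ there by $(t^2L)^N\Phi_t$. That argument is built on exponential off-diagonal estimates for each factor, so the one new ingredient is such an estimate for $(t^2L)^k\Phi_t$. Here the finite speed of propagation property does the work: since $1_{E'}(t^2L)^k\Phi_t(1_E f) = 0$ whenever $t\leq d(E,E')$, and the operators $(t^2L)^k\Phi_t$ are uniformly bounded on $L^2(M)$, we obtain a fortiori the bound $\|1_{E'}(t^2L)^k\Phi_t(1_E f)\|_{L^2}\lesssim \exp(-d(E,E')^2/ct^2)\|1_E f\|_{L^2}$ — trivially when $d(E,E') > t$, and by uniform boundedness when $d(E,E')\leq t$. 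Splitting into the regimes $t\leq s$ and $s\leq t$ and transferring the powers of $L$ onto the larger-scale factor (using that $\Phi_t$ commutes with $L$) exactly as in Lemma \ref{offdiaglemma}, I would then compose the two exponential off-diagonal estimates to obtain \eqref{OD} with $\alpha = \beta = 2N$ and exponential, hence arbitrarily large polynomial, decay in $d(E,E')/\max (t,s)$. The hypothesis $N\geq \lfloor n/2\rfloor + 1$ ensures $2N > n$ and in turn $\gamma$ may be taken well above $3n/2$.

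Finally I would invoke Theorem \ref{intop} to conclude that $S$ and $Q_N\widetilde{\pi}_N$ are bounded on $T^p(X)$ for every $1\leq p < \infty$. Combined with the identities above and the isometry of $Q_N$, this gives
\begin{equation*}
  \|\widetilde{Q}_N f\|_{T^p(X)} = \|S(Q_N f)\|_{T^p(X)}\lesssim \|Q_N f\|_{T^p(X)} = \|f\|_{H^p_L(X)}
\end{equation*}
and
\begin{equation*}
  \|\widetilde{\pi}_N u\|_{H^p_L(X)} = \|Q_N\widetilde{\pi}_N u\|_{T^p(X)}\lesssim \|u\|_{T^p(X)} ,
\end{equation*}
which are precisely the two asserted bounds.
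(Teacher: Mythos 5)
Your proposal is correct and follows essentially the same route as the paper: rewrite $\widetilde{Q}_N f$ and $Q_N\widetilde{\pi}_N u$ via the reproducing formula as integral operators with kernels mixing $(t^2L)^N\Phi_t$ and the heat semigroup, verify \eqref{OD} by repeating Lemma \ref{offdiaglemma} with the finite speed of propagation supplying the exponential off-diagonal estimate for $(t^2L)^k\Phi_t$, and conclude by Theorem \ref{intop}. The details you spell out (the two regimes $d(E,E')\gtrless t$ and the transfer through the isometry $Q_N$) are exactly what the paper's terser proof leaves implicit.
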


\begin{def*}
  A function $m\in L^2(M) \otimes X$ is said to be an $L$-\emph{atom} of order $K$
  associated with a ball $B\subset M$ if there exists a function $\widetilde{m} \in \dom (L^K) \otimes X$, 
  such that
  \begin{itemize}
    \item $m = L^K \widetilde{m}$,
    \item $\supp m \subset B$,
    \item $\| (r_B^2L)^k \widetilde{m} \|_{H_L^2(X)} \leq r_B^{2K}\mu (B)^{-1/2}$, $k=0,1,\ldots , K$.
  \end{itemize}
\end{def*}

\begin{remark}
  It is not clear if all $L$-atoms belong to $H^1_L(X)$ as in the scalar-valued setting (see
  \cite[Proposition 4.4]{HOFMANNHARDY}).
\end{remark}

\begin{prop}
\label{atomstomolecules}
  Let $a\in T^1 \otimes X$ be an atom in $T(B)$ for a ball $B\subset M$ and let $K$ be a positive integer. Then 
  $\widetilde{\pi}_{N+K} a \in H_L^1(X)$ is an (constant multiple of an) $L$-atom of order $K$ 
  in $2B$ whenever $N\geq \lfloor n/2 \rfloor + 1$. 
  \begin{proof}
    Choosing
    \begin{equation*}
      \widetilde{m} = \int_0^{r_B} t^{2(N+K)} L^N \Phi_t a(\cdot , t) \Dt \in D(L^K) \otimes X
    \end{equation*}
    we obtain
    \begin{equation*}
      L^K\widetilde{m} = \int_0^{r_B} (t^2L)^{N+K} \Phi_t a(\cdot , t) \Dt = \widetilde{\pi}_{N+K}a ,
    \end{equation*}
    as usual (cf. \cite[Lemma 4.11]{HOFMANNHARDY}).
    
    To see that $\supp \widetilde{\pi}_{N+K}a \subset 2B$ it suffices to note that for all $t \leq r_B$
    we have $\supp a(\cdot , t) \subset B$ and thus also
    \begin{equation*}
      1_{M\setminus 2B}(t^2L)^{N+K}\Phi_t a(\cdot , t) = 0 .
    \end{equation*}
    
    For the size condition we pair $(r_B^2L)^k\widetilde{m}$ with an arbitrary $g\in \overline{\ran (L)}\otimes X^*$
    and estimate as follows:
    \begin{align*}
      \Big| \int_M \la (r_B^2L)^k\widetilde{m}(\cdot) , g(\cdot) \ra \D\mu \Big|
      &= \Big| \int_M \Big\la \int_0^{r_B} t^{2(N+K)}r_B^{2k}L^{N+k} \Phi_t a(\cdot , t) \Dt ,
      g(\cdot) \Big\ra \D\mu \Big| \\
      &= \Big| \int_0^{r_B} t^{2(N+K)} r_B^{2k} \int_M \la a(\cdot ,t),L^{N+k} \Phi_t g(\cdot) \ra \D\mu \Dt \Big| \\
      &\leq r_B^{2K} \iint_{M^+} |\la a(\cdot ,t) , (t^2L)^{N+k} \Phi_t g(\cdot ) \ra | \, \frac{\textup{d}\mu \D t}{t} \\
      &\lesssim r_B^{2K} \| a \|_{T^2(X)} \| \widetilde{Q}_{N+k}g \|_{T^2(X^*)} \\
      &\lesssim r_B^{2K} \mu (B)^{-1/2} \| g \|_{H_L^2(X^*)} .
    \end{align*}
    The required norm estimate follows then by duality (Proposition \ref{hardyduality}).
  \end{proof}
\end{prop}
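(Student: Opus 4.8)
The plan is to exhibit explicitly the ``primitive'' $\widetilde{m}$ witnessing that $m := \widetilde{\pi}_{N+K}a$ is an $L$-atom, and then to verify the three defining conditions in turn, deferring the genuinely analytic work to the size estimate. Since $a$ is supported in $T(B)$, its slices $a(\cdot,t)$ vanish for $t > r_B$, so the integral defining $\widetilde{\pi}_{N+K}a$ runs effectively only over $(0,r_B]$; factoring a single power $L^K$ out of $(t^2L)^{N+K}$ suggests setting
\begin{equation*}
  \widetilde{m} = \int_0^{r_B} t^{2(N+K)} L^N \Phi_t a(\cdot , t)\Dt \in \dom (L^K)\otimes X,
\end{equation*}
so that $L^K\widetilde{m} = \int_0^{r_B}(t^2L)^{N+K}\Phi_t a(\cdot,t)\Dt = \widetilde{\pi}_{N+K}a$, which gives the first atomic condition. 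That $m\in H^1_L(X)$ in the first place follows at once from boundedness of $\widetilde{\pi}_{N+K}$ in Proposition \ref{pivarphi}, valid because $N+K\geq\lfloor n/2\rfloor+1$.

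For the support condition I would invoke finite speed of propagation. For each $t\leq r_B$ the slice $a(\cdot,t)$ is supported in $B$, while $d(B,M\setminus 2B)\geq r_B\geq t$; hence $1_{M\setminus 2B}(t^2L)^{N+K}\Phi_t(1_B a(\cdot,t)) = 0$ for every such $t$, and integrating in $t$ shows $\supp m\subset 2B$.

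The main obstacle is the size condition, where the vector-valued setting forces a detour: unlike the scalar case one cannot compute the $H^2_L(X)$-norm of $(r_B^2L)^k\widetilde{m}$ directly, since $T^2(X)\neq L^2$. Instead I would use the duality $H^2_L(X)^*\simeq H^2_L(X^*)$ of Proposition \ref{hardyduality} and pair $(r_B^2L)^k\widetilde{m}$ with an arbitrary $g\in\overline{\ran (L)}\otimes X^*$. Writing the pairing out, moving the self-adjoint operators $L^{N+k}\Phi_t$ onto $g$ (justified by self-adjointness of $L$ together with $\Phi_t$ being a function of $L$, and by the $\int_\varepsilon^R$-then-dominated-convergence reading of Fubini), and regrouping $t^{2(N+K)}L^{N+k} = t^{2(K-k)}(t^2L)^{N+k}$, one recognises $(t^2L)^{N+k}\Phi_t g = \widetilde{Q}_{N+k}g(\cdot,t)$ and is left with a tent-space pairing of $a$ against $\widetilde{Q}_{N+k}g$. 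Because $t\leq r_B$ and $0\leq k\leq K$, the scalar factor $t^{2(K-k)}r_B^{2k}$ is at most $r_B^{2K}$, so pulling it out and applying the $T^2$-pairing bound (Proposition \ref{refl}) gives
\begin{equation*}
  \bigl| \la (r_B^2L)^k\widetilde{m}, g \ra \bigr| \lesssim r_B^{2K}\,\|a\|_{T^2(X)}\,\|\widetilde{Q}_{N+k}g\|_{T^2(X^*)}.
\end{equation*}
Finally I would bound $\|a\|_{T^2(X)}\leq\mu(B)^{-1/2}$ by the atom property and $\|\widetilde{Q}_{N+k}g\|_{T^2(X^*)}\lesssim\|g\|_{H^2_L(X^*)}$ by Proposition \ref{pivarphi} (applicable since $N+k\geq\lfloor n/2\rfloor+1$ for every such $k$), and then take the supremum over $g$ to conclude $\|(r_B^2L)^k\widetilde{m}\|_{H^2_L(X)}\lesssim r_B^{2K}\mu(B)^{-1/2}$. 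Rewriting this in terms of $r_{2B}=2r_B$ and $\mu(2B)$ via doubling absorbs only harmless constants, which is precisely what ``a constant multiple of an $L$-atom'' allows. The delicate points to watch are the self-adjointness step that produces $\widetilde{Q}_{N+k}g$ and the verification that the order $N+k$ remains admissible uniformly over $0\leq k\leq K$.
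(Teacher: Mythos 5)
Your proposal is correct and follows essentially the same route as the paper: the same choice of $\widetilde{m}$, the same finite-speed-of-propagation argument for the support, and the same duality pairing against $g\in\overline{\ran (L)}\otimes X^*$ reducing the size condition to the $T^2$-pairing bound together with Proposition \ref{pivarphi}. The only additions are minor but sound clarifications (uniform admissibility of the order $N+k$ and the doubling adjustment from $B$ to $2B$) that the paper leaves implicit.
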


\begin{theorem}
\label{Hardyatomicdec}
  Every $f\in \overline{\ran (L)} \otimes X$ in $H^1_L(X)$ can be written, for any positive integer $K$, 
  as a sum of $L$-atoms $m_k\in H^1_L(X)$ of order $K$ so that
  \begin{equation*}
    f = \sum_k \lambda_k m_k ,
  \end{equation*}
  where the sum converges in both $H^1_L(X)$ and $L^2(X)$, 
  and the scalars $\lambda_k$ satisfy
  \begin{equation*}
    \sum_k |\lambda_k| \eqsim \| f \|_{H^1_L(X)} .
  \end{equation*}
  Moreover, if $H^2_L(X) = L^2(X)$, then the sum converges also in
  $L^1(X)$.
  \begin{proof}
    Let $K$ be a positive integer.
    Given an $f\in \overline{\ran (L)} \otimes X$ in $H^1_L(X)$ we fix an $N\geq \lfloor n/2 \rfloor + 1$ and decompose $Q_Nf\in T^1 \otimes X$ 
    into atoms $a_k$ by Theorem \ref{atomicdec} so that
    \begin{equation*}
      Q_Nf = \sum_k \lambda_k a_k \quad \textup{and} \quad \sum_k |\lambda_k| \eqsim \| Q_N f \|_{T^1(X)} 
      \eqsim \| f \|_{H^1_L(X)} ,
    \end{equation*}
    where the sum for $Q_Nf$ converges in both $T^1(X)$ and $T^2(X)$.
    Consequently, for a constant $c$ we have
    \begin{equation}
    \label{atomsum}
      f = c\widetilde{\pi}_{N+K}Q_Nf = c \sum_k \lambda_k \widetilde{\pi}_{N+K}a_k ,
    \end{equation}    
    where, by Proposition \ref{atomstomolecules}, 
    $\widetilde{\pi}_{N+K}a_k$ are (constant multiples of) $L$-atoms of order $K$ and the sum converges in both $H^1_L(X)$ and $H^2_L(X)$.
    
    Assuming that $H^2_L(X) = L^2(X)$, we see that $L$-atoms are
    uniformly bounded in $L^1(X)$. Indeed,
    an $L$-atom $m\in L^2(M)\otimes X$ associated with a 
    ball $B$ satisfies
    \begin{equation*}
      \| m \|_{L^1(X)} \leq \mu (B)^{1/2} \| m \|_{L^2(X)} \lesssim \mu (B)^{1/2} \| m \|_{H^2_L(X)} \leq 1 .
    \end{equation*}
    The right-hand side of \eqref{atomsum} is therefore absolutely
    summable in $L^1(X)$ and converges in $L^1(X)$ to a limit which must
    coincide with its limit in $L^2(X)$, that is, $f$.
  \end{proof}
\end{theorem}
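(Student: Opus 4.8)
The plan is to transport the tent space atomic decomposition of Theorem \ref{atomicdec} to the Hardy space by means of the auxiliary operators $\widetilde{Q}_N$, $\widetilde{\pi}_N$ and the molecule-building Proposition \ref{atomstomolecules}. Fix a positive integer $K$ and an $N\geq \lfloor n/2 \rfloor + 1$, and start with $f\in\overline{\ran (L)} \otimes X$ belonging to $H^1_L(X)$. The first step is to observe that $Q_Nf$ lands in $(T^1 \cap T^2) \otimes X$: the remark following the definition of $H^p_{L,N}(X)$ gives $Q_Nf\in T^2 \otimes X$, while $\| Q_Nf \|_{T^1(X)} = \| f \|_{H^1_L(X)} < \infty$ by the very definition of the Hardy norm. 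This double membership is the crucial technical hook, since it is precisely what activates the stronger convergence clauses later.

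Next I would apply Theorem \ref{atomicdec} to $Q_Nf$, producing atoms $a_k$ in tents $T(B_k)$ with $Q_Nf = \sum_k \lambda_k a_k$, where $\sum_k |\lambda_k| \eqsim \| Q_Nf \|_{T^1(X)} \eqsim \| f \|_{H^1_L(X)}$; because $Q_Nf\in (T^1 \cap T^2) \otimes X$, the final clause of that theorem yields convergence of the sum in both $T^1(X)$ and $T^2(X)$. I would then invoke the reproducing formula in the form $f = c\,\widetilde{\pi}_{N+K}Q_Nf$ and push $\widetilde{\pi}_{N+K}$ through the series. Boundedness of $\widetilde{\pi}_{N+K}$ simultaneously on $T^1(X)$ and $T^2(X)$ (Proposition \ref{pivarphi}) transfers the double convergence, so $f = c\sum_k \lambda_k \widetilde{\pi}_{N+K}a_k$ converges in both $H^1_L(X)$ and $H^2_L(X)$. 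Proposition \ref{atomstomolecules} identifies each $\widetilde{\pi}_{N+K}a_k$ as a constant multiple of an $L$-atom of order $K$ in $2B_k$, and absorbing that universal constant into the coefficients $\lambda_k$ delivers the claimed decomposition together with the coefficient estimate.

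For the final assertion, under the hypothesis $H^2_L(X) = L^2(X)$ I would prove that $L$-atoms are uniformly bounded in $L^1(X)$. An $L$-atom $m$ associated with a ball $B$ is supported in $B$, so Cauchy--Schwarz gives $\| m \|_{L^1(X)} \leq \mu (B)^{1/2} \| m \|_{L^2(X)}$; the $k=0$ size condition together with the identification $H^2_L(X) = L^2(X)$ bounds the right-hand side by an absolute constant. Consequently $\sum_k |\lambda_k| \, \| m_k \|_{L^1(X)} \lesssim \sum_k |\lambda_k| < \infty$, so the series is absolutely summable in $L^1(X)$ and hence convergent there; its $L^1(X)$-limit must agree with its $L^2(X)$-limit, which has already been identified as $f$.

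I expect the only genuinely delicate point to be the bookkeeping of convergence in several norms at once: the entire argument rests on having the $T^2(X)$-convergence of the tent decomposition available (which in turn forced the hypothesis $Q_Nf\in (T^1\cap T^2)\otimes X$), and on the operators $\widetilde{\pi}_{N+K}$ being bounded for both exponents so that this convergence survives the passage to the Hardy space. Once those two ingredients are in place, everything else is a matter of assembling previously established facts, and the identification of the $L^1(X)$ and $L^2(X)$ limits is routine.
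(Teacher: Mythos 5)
Your proposal is correct and follows essentially the same route as the paper: decompose $Q_Nf\in (T^1\cap T^2)\otimes X$ via Theorem \ref{atomicdec}, apply $\widetilde{\pi}_{N+K}$ using the reproducing formula and its boundedness on both $T^1(X)$ and $T^2(X)$, identify the images as $L$-atoms via Proposition \ref{atomstomolecules}, and deduce $L^1(X)$-convergence from the uniform $L^1$ bound on $L$-atoms when $H^2_L(X)=L^2(X)$. Your explicit justification that $Q_Nf$ lies in $(T^1\cap T^2)\otimes X$ and your explicit appeal to Proposition \ref{pivarphi} for the two-exponent boundedness are details the paper leaves implicit, but the argument is the same.
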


\begin{cor}
  Suppose that $H_L^2(X) = L^2(X)$. For every $f\in L^2(M) \otimes X$ we have
  \begin{itemize}
    \item $\| f \|_{L^p(X)} \lesssim \| f \|_{H^p_L(X)}$ when $1\leq p \leq 2$,
    \item $\| f \|_{H^p_L(X)} \lesssim \| f \|_{L^p(X)}$ when $2\leq p < \infty$.
  \end{itemize}
  \begin{proof}
    Every $f\in L^2(M) \otimes X$ in $H^1_L(X)$ admits, by 
    Theorem \ref{Hardyatomicdec}, an $L^1(X)$-convergent 
    decomposition into $L$-atoms 
    (which are uniformly bounded in $L^1(X)$) and so
    \begin{equation*}
      \| f \|_{L^1(X)} \leq \sum_k |\lambda_k| \eqsim \| f \|_{H^1_L(X)} .
    \end{equation*}
    By interpolation (Theorem \ref{main1}) we have $\| f \|_{L^p(X)} \lesssim \| f \|_{H^p_L(X)}$ when $1\leq p \leq 2$.
    
    The second inequality $\| f \|_{H^p_L(X)} \lesssim \| f \|_{L^p(X)}$ for $2\leq p < \infty$ follows from the first 
    by duality:
    \begin{align*}
      \| f \|_{H^p_L(X)} &\eqsim \sup \{ | \la f,g \ra | : g\in L^2(M) \otimes X^*, \| g \|_{H^{p'}_L(X^*)} \leq 1 \} \\
      &\lesssim \sup \{ | \la f,g \ra | : g\in L^2(M) \otimes X^*, \| g \|_{L^{p'}(X^*)} \leq 1 \} \eqsim \| f \|_{L^p(X)} .
    \end{align*}
  \end{proof}
\end{cor}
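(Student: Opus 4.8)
The plan is to anchor both inequalities at the endpoints $p=1$ and $p=2$ and then propagate them to intermediate exponents by complex interpolation and duality. At $p=2$ the hypothesis $H^2_L(X)=L^2(X)$ furnishes the comparison of norms for free, so all the substance sits at $p=1$.

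First I would establish $\|f\|_{L^p(X)}\lesssim\|f\|_{H^p_L(X)}$ at $p=1$. The key observation is that $H^2_L(X)=L^2(X)$ forces the $L$-atoms to be uniformly bounded in $L^1(X)$: for an $L$-atom $m$ in a ball $B$, the embedding $L^2\hookrightarrow L^1$ on $B$ combined with the size condition gives $\|m\|_{L^1(X)}\le\mu(B)^{1/2}\|m\|_{L^2(X)}\lesssim\mu(B)^{1/2}\|m\|_{H^2_L(X)}\le 1$. Feeding this into the atomic decomposition of Theorem \ref{Hardyatomicdec} --- whose convergence is in $L^1(X)$ precisely under this hypothesis --- yields $\|f\|_{L^1(X)}\le\sum_k|\lambda_k|\eqsim\|f\|_{H^1_L(X)}$. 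With the endpoints $p=1$ and $p=2$ in hand, the identity map on the common dense subspace $\overline{\ran (L)}\otimes X$ is bounded from $H^1_L(X)$ to $L^1(X)$ and from $H^2_L(X)$ to $L^2(X)$; since $(H^p_L(X))$ forms a complex interpolation scale (through the identification with the complemented subspaces $QH^p_L(X)$ and Theorem \ref{main1}) and $(L^p(X))$ does as well in the same parameter, complex interpolation of operators delivers $\|f\|_{L^p(X)}\lesssim\|f\|_{H^p_L(X)}$ for every $1\le p\le 2$.

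For the reverse inequality on $2\le p<\infty$ I would argue by duality. Proposition \ref{hardyduality} lets me write $\|f\|_{H^p_L(X)}\eqsim\sup\{|\la f,g\ra|:g\in L^2(M)\otimes X^*,\ \|g\|_{H^{p'}_L(X^*)}\le 1\}$ with $1<p'\le 2$, the $L^2$ pairing being also the one that realizes $L^p(X)^*\simeq L^{p'}(X^*)$. The space $X^*$ is again UMD, and the hypothesis is self-dual: applying Proposition \ref{hardyduality} at $p=2$ together with $L^2(X)^*\simeq L^2(X^*)$ shows that $H^2_L(X)=L^2(X)$ implies $H^2_L(X^*)=L^2(X^*)$. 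Hence the first inequality applies with $X$, $p$ replaced by $X^*$, $p'$, giving $\|g\|_{L^{p'}(X^*)}\lesssim\|g\|_{H^{p'}_L(X^*)}$; the unit ball of $H^{p'}_L(X^*)$ therefore sits, up to a constant, inside a ball of $L^{p'}(X^*)$, and passing to the supremum gives $\|f\|_{H^p_L(X)}\lesssim\|f\|_{L^p(X)}$.

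The crux, as I see it, is the $p=1$ endpoint: everything else is soft interpolation and duality bookkeeping, whereas the $L^1$-bound rests entirely on the hypothesis $H^2_L(X)=L^2(X)$, which is exactly what renders the atoms uniformly integrable and upgrades the convergence in Theorem \ref{Hardyatomicdec} from $H^1_L(X)$ and $H^2_L(X)$ to $L^1(X)$. Without that hypothesis the atomic series need not converge in $L^1(X)$ and the endpoint estimate fails. The one technical point I would be careful about is verifying that the hypothesis passes to $X^*$, which I would handle through the $p=2$ case of Proposition \ref{hardyduality} as above.
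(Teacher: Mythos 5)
Your proposal is correct and follows essentially the same route as the paper: the $L^1$-endpoint via the uniform $L^1(X)$-bound on $L$-atoms and the $L^1(X)$-convergent atomic decomposition of Theorem \ref{Hardyatomicdec}, interpolation against the trivial $p=2$ case, and duality through Proposition \ref{hardyduality} for $2\leq p<\infty$. Your explicit check that the hypothesis $H^2_L(X)=L^2(X)$ passes to $X^*$ is a detail the paper leaves implicit, and it is a worthwhile addition.
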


\begin{remark}
  We refrain from addressing the question 
  whether $H^p_L(X)$ embeds in $L^p(X)$ for $1 \leq p \leq 2$ (or vice
  versa for $2\leq p < \infty$). This subtle matter has been discussed
  at length in \cite{MORRISREPR}.
\end{remark}

\begin{example}
\label{example}
  Let $L=\Delta$ be the (non-negative) Laplacian on $M=\R^n$ with the Lebesgue measure. For functions
  $f\in L^2(\R^n) \otimes X$ we have
  \begin{equation*}
    Q_Nf(y,t) = (t^2\Delta)^N e^{-t^2\Delta}f(y) = \int_{\R^n} \Psi_t(y-z)f(z)\D z ,
  \end{equation*} 
  where the Fourier transform of the Schwartz function $\Psi_t$ is given by
  \begin{equation*}
    \widehat{\Psi_t}(\xi) = (t^2|\xi |^2)^N e^{-t^2|\xi |^2} , \quad \xi\in\R^n . 
  \end{equation*}
  As in the proofs of \cite[Theorems 8.2 and 4.8]{HVNPCONICAL} this gives rise to a singular integral operator  
  \begin{equation*}
    Tf(x) = \int_{\R^n} K(x,z) f(z) \D z
  \end{equation*}
  with an operator-valued kernel $K(x,z) \in \Ll (X,\gamma (L^2(\R^{n+1}_+),X))$ so that
  \begin{equation*}
    \| f \|_{H_{\Delta}^p(X)} \eqsim \| Tf \|_{L^p(\gamma (L^2(\R^{n+1}_+),X))}
  \end{equation*}
  for test functions $f\in C^\infty_c(\R^n) \otimes X$.
  
  In the proof of \cite[Theorem 4.8]{HVNPCONICAL} $T$ is shown to be a Calderón--Zygmund operator and thus for
  $1 < p < \infty$ we have
  \begin{equation*}
    \| f \|_{H_{\Delta}^p(X)} \lesssim \| f \|_{L^p(X)} .
  \end{equation*}
  Moreover, the same inequality holds for $X^*$, namely
  \begin{equation*}
    \| g \|_{H_{\Delta}^p(X^*)} \lesssim \| g \|_{L^p(X^*)} ,
  \end{equation*}
  and therefore $H^p_{\Delta}(X) = L^p(X)$ when $1 < p < \infty$.
  
  Let us also remark that $H^1_{\Delta}(X)$ coincides with the \emph{atomic Hardy space} \label{hardydef}
  $H^1_{at}(X)$ which is defined to consist of functions $f\in L^1(X)$
  that can be expressed as sums of (classical) atoms $m_k$ so that
  \begin{equation*}
    f = \sum_k \lambda_k m_k  \quad \textup{and} \quad \| f \|_{H^1_{at}(X)} = \inf \sum_k |\lambda_k | < \infty .
  \end{equation*}
  Here a classical atom is a function $m\in L^2(X)$ which is supported in a ball $B\subset\R^n$ and satisfies
  \begin{equation*}
    \int_B m(x) \D x = 0 \quad \textup{and} \quad \| m \|_{L^2(X)} \leq |B|^{-1/2} .
  \end{equation*}
  Indeed, as a Calderón--Zygmund operator, $T$ is bounded from $H^1_{at}(X)$ to $L^1(\gamma (L^2(\R^{n+1}_+),X))$,
  and thus for all $f\in C^\infty_c(\R^n) \otimes X$ with zero mean we have
  \begin{equation*}
    \| f \|_{H^1_{\Delta}(X)} \lesssim \| f \|_{H^1_{at}(X)} .
  \end{equation*}
  On the other hand, every $L$-atom $m$ is (a constant multiple of) a classical atom since
  \begin{equation*}
    \int_{\R^n} m(x) \D x = \int_{\R^n} \Delta \widetilde{m}(x) \D x = 0 \quad \textup{and} \quad
    \| m \|_{L^2(X)} \lesssim \| m \|_{H^2_{\Delta}(X)} \leq |B|^{-1/2} .
  \end{equation*}
  Theorem \ref{Hardyatomicdec} then guarantees that every
  $f\in L^2(\R^n) \otimes X$ in $H^1_{\Delta}(X)$ satisfies
  \begin{equation*}
    \| f \|_{H^1_{at}(X)} \lesssim \| f \|_{H^1_{\Delta}(X)} .
  \end{equation*} 

\end{example}

\begin{remark}
  For a wide class of Schr{\"o}dinger operators $L=\Delta + V$ with non-negative potentials $V$ on $\R^n$
  (including the harmonic oscillator with $V(x) = |x|^2$) it has been shown by Betancor et al.
  \cite{BETANCORNEW} that the conical square function estimate
  \begin{equation*}
    \| Q_P f \|_{T^p(X)} \eqsim \| f \|_{L^p(X)} , \quad Q_Pf(y,s) = s\sqrt{L} e^{-s\sqrt{L}}f(y),
  \end{equation*}
  associated with the Poisson semigroup, holds for $1 < p < \infty$ 
  whenever $X$ is a UMD space. Such operators $L$ satisfy the off-diagonal
  estimates (see \cite[Chapter 8]{HOFMANNHARDY}) and are therefore within the framework of this article.
  That $\| f \|_{H_L^p(X)} \lesssim \| Q_P f \|_{T^p(X)}$ follows again by means of integral operators on tent spaces
  (cf. the proof of Theorem \ref{main2}). Indeed, the reproducing formula
  \begin{equation*}
    f = c \int_0^\infty (s\sqrt{L})^{2N+1} e^{-2s\sqrt{L}}f \Ds
  \end{equation*}
  is valid (by spectral theory) and the kernel
  \begin{equation*}
    K(t,s) = (t^2L)^Ne^{-t^2L} (s\sqrt{L})^{2N} e^{-s\sqrt{L}}
  \end{equation*}
  satisfies the required estimate \eqref{OD} when 
  $N\geq \lfloor n/2 \rfloor + 1$, which can be seen with the aid of 
  \cite[Lemma 4.15]{HOFMANNHARDY}. As in the example above, we can then argue by duality to see that
  $H_L^p(X)=L^p(X)$ for $1<p<\infty$.
\end{remark}

\begin{appendix}

\section{Completeness and dense subspaces of tent spaces}

\begin{prop}
\label{tentcompleteness}
  For every $1\leq p < \infty$ and $\alpha \geq 1$, the tent space $T^p_\alpha(X)$ 
  is complete and contains $L^2_c(M^+) \otimes X$ as a dense subspace.
\end{prop}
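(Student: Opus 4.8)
The plan is to realise $T^p_\alpha(X)$ as an isometric copy of a closed subspace of $L^p(M;Y)$, where $Y=\gamma(L^2(M^+),X)$ is complete, and to transport completeness from the latter. Writing $J_\alpha u(x)=u1_{\Gamma_\alpha(x)}$, the very definition of the norm makes $J_\alpha:T^p_\alpha(X)\to L^p(M;Y)$ an isometry. The geometric fact I would use throughout is that every point $(y,t)\in M^+$ lies in the interior of the cone $\Gamma_\alpha(y)$; indeed $B(y,\varepsilon)\times(t-\varepsilon,t+\varepsilon)\subset\Gamma_\alpha(y)$ once $\varepsilon(1+\alpha)<\alpha t$. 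Consequently any compact $K\subset M^+$ is covered by finitely many cones and so admits a finite measurable partition $K=\bigsqcup_i K_i$ with $K_i\subset\Gamma_\alpha(x_i)$ for suitable $x_i\in M$.

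For completeness I would take a Cauchy sequence $(u_n)$ in $T^p_\alpha(X)$, so that $F_n:=J_\alpha u_n$ is Cauchy in $L^p(M;Y)$ and converges to some $F$; after passing to a subsequence, $F_n(x)\to F(x)$ in $Y$ for a.e.\ $x$. To build the limit operator I fix $h\in L^2_c(M^+)$ with support $K$, choose a partition as above with each $x_i$ lying in the full-measure set where $F_n(x_i)\to F(x_i)$, and write $u_nh=\sum_i(u_n1_{\Gamma_\alpha(x_i)})(1_{K_i}h)=\sum_iF_n(x_i)(1_{K_i}h)$. Since the $Y$-norm dominates the operator norm, each summand converges in $X$, so $uh:=\lim_nu_nh$ exists and defines a linear operator $u:L^2_c(M^+)\to X$. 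For a.e.\ $x$ and every $h$ one then has $u1_{\Gamma_\alpha(x)}h=\lim_n(u_n1_{\Gamma_\alpha(x)})h=\lim_nF_n(x)h=F(x)h$, so that $J_\alpha u=F$. In particular $u\in T^p_\alpha(X)$ and $\|u_n-u\|_{T^p_\alpha(X)}=\|F_n-F\|_{L^p(M;Y)}\to0$, proving that the range of $J_\alpha$ is closed and hence that $T^p_\alpha(X)$ is complete.

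For density I would argue in two steps. First, truncate: given $u\in T^p_\alpha(X)$ and an exhaustion of $M^+$ by compact sets $K_j$, the right-ideal property gives $\Aa_\alpha(u1_{M^+\setminus K_j})(x)=\|u1_{\Gamma_\alpha(x)\setminus K_j}\|_Y\leq\Aa_\alpha u(x)\in L^p$, while for each $x$ this tends to $0$ because $u1_{\Gamma_\alpha(x)}$ is a $Y$-limit of finite-rank operators and $1_{K_j}\uparrow1$ on $\Gamma_\alpha(x)$; Lebesgue's theorem then yields $\|u-u1_{K_j}\|_{T^p_\alpha(X)}\to0$. Second, approximate $v:=u1_{K_j}$: by the partition lemma $v=\sum_i(u1_{\Gamma_\alpha(x_i)})1_{K_i}$ is a finite sum of elements of $Y$, hence belongs to $Y$, and so can be approximated in $Y$-norm by finite-rank $w=\sum_ig_i\otimes\xi_i$ with $g_i\in L^2(K_j)$, i.e.\ $w\in L^2_c(M^+)\otimes X$. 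Since $\Aa_\alpha(v-w)$ is supported in the bounded set $\{x:\Gamma_\alpha(x)\cap K_j\neq\emptyset\}$ and bounded there by $\|v-w\|_Y$, one gets $\|v-w\|_{T^p_\alpha(X)}\lesssim\mu(\{x:\Gamma_\alpha(x)\cap K_j\neq\emptyset\})^{1/p}\|v-w\|_Y$, which is small. Combining the two steps gives density.

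The step I expect to be the main obstacle is the reconstruction of the limit operator in the completeness argument: one must guarantee that $uh=\lim_nu_nh$ is well defined and that the recovered operator satisfies $u1_{\Gamma_\alpha(x)}=F(x)$ almost everywhere. This rests entirely on the local cone-coverage observation, which allows one to test $u$ against functions supported in a single cone, together with the domination of the operator norm by the $Y$-norm. The same coverage argument also underlies the verification that $u1_{K_j}$ is a genuine element of $Y$ in the density proof.
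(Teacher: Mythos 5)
Your argument is correct and follows essentially the same route as the paper's Appendix A: the decisive point in both is that a compact subset of $M^+$ is covered by finitely many cones $\Gamma_\alpha(x_i)$, which makes the $\gamma$-norm of a compactly supported restriction comparable to the tent-space norm and allows completeness to be transferred from $\gamma(L^2(M^+),X)$. The only organizational difference is that the paper assembles the limit operator from the $\gamma$-limits of $u_k 1_K$ over compact sets $K$, whereas you assemble it from a.e.\ limits of $x\mapsto u_k 1_{\Gamma_\alpha(x)}$ in $L^p(M;\gamma(L^2(M^+),X))$ --- a cosmetic variation.
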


We follow the classical proof of the corresponding scalar-valued result (see \cite[Section 1]{CMSTENTSPACES} and 
\cite[Lemma 3.3 and Proposition 3.4]{AMENTA}).
For simplicity we omit the $\alpha$ as it is immaterial for the proofs and
abbreviate $\| \cdot \|_\gamma$ for $\| \cdot \|_{\gamma (L^2(M^+),X)}$.

\begin{lemma}
\label{compactsubs}
  Let $1\leq p < \infty$ and $u\in T^p(X)$. Then
  \begin{enumerate}
    \item $\| u \|_{T^p(X)} = \sup_K \| u1_K \|_{T^p(X)}$, where the supremum is over compact $K\subset M^+$,
    \item $\inf_K \| u1_{K^c} \|_{T^p(X)} = 0$, where the infimum is over compact $K\subset M^+$,
    \item for every compact $K\subset M^+$ there exists a constant $c_K$ such that 
    \begin{equation*}
      c_K^{-1} \| u1_K \|_{T^p(X)} \leq \| u1_K \|_\gamma \leq c_K \| u \|_{T^p(X)} .
    \end{equation*}
  \end{enumerate}
  \begin{proof}
    For the first claim, write 
    $\Gamma (x;\varepsilon ) = \{ (y,t)\in\Gamma (x) : \varepsilon < t < 1/\varepsilon \}$ and note that
    as $\varepsilon$ tends to zero, the increasing sequence $\| u1_{\Gamma (x;\varepsilon)} \|_\gamma$
    tends to $\| u1_{\Gamma (x)} \|_\gamma$. Therefore,
    \begin{align*}
      \| u \|_{T^p(X)} &= \lim_{\varepsilon\to 0} \Big( \int_M \| u1_{\Gamma (x;\varepsilon)} \|_\gamma^p \D\mu (x)
      \Big)^{1/p} \\
      &= \sup_{\varepsilon , B} \Big( \int_B \| u1_{\Gamma (x;\varepsilon)} \|_\gamma^p \D\mu (x) \Big)^{1/p} \\
      &\leq \sup_K \| u1_K \|_{T^p (X)} ,
    \end{align*}
    because whenever $x$ is in a ball $B\subset M$ and $\varepsilon > 0$, the cone $\Gamma (x;\varepsilon)$ is
    contained in a compact $K\subset M^+$.
    
    The second claim follows by monotone convergence after choosing 
    an increasing (and exhausting) sequence of compact subsets $K$ so that
    for every $x\in M$ the decreasing sequence $\Aa (u1_{K^c})(x) = \| u1_{K^c\cap\Gamma (x)} \|_\gamma$ tends to zero.
    
    To prove the right hand side in the inequality of the third claim, write
    $S(K) = \{ x\in M : \Gamma (x) \cap K \neq \emptyset \}$ and observe that
    $\Aa (u1_K)(x) \leq \| u1_K \|_\gamma$ to obtain
    \begin{equation*}
      \| u1_K \|_{T^p(X)} = \Big( \int_{S(K)} \Aa (u1_K)(x)^p \D\mu (x) \Big)^{1/p}
      \leq \mu (S(K))^{1/p} \| u1_K \|_\gamma . 
    \end{equation*}
    
    The left hand side in the inequality of the third claim follows by choosing a finite number $N(K)$ of 
    (small) balls $B$ so that
    $K\subset \bigcup_B (B\times (0,\infty)) =: \bigcup_B B^+$ and so that for every $x\in B$ we have 
    $K\cap B^+ \subset \Gamma_\alpha (x)$. Then for each $B$ we have $\| u1_{K\cap B^+} \|_\gamma \leq \Aa u(x)$
    when $x\in B$ and therefore
    \begin{equation*}
      \| u1_K \|_\gamma \leq \sum_B \| u1_{K\cap B^+} \|_\gamma 
      \leq \sum_B \Big( \fint_B \Aa u(x)^p \D\mu (x) \Big)^{1/p}
      \leq \frac{N(K)}{\inf_B \mu(B)^{1/p}} \| \Aa u \|_{L^p} = c_K \| u \|_{T^p(X)}
    \end{equation*}
  \end{proof}
\end{lemma}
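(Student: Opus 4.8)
The plan is to prove the three claims in sequence, leaning on two standard facts about $\gamma$-radonifying operators (see \cite{GAMMARAD}): the \emph{right ideal property}, which gives $\| u1_E \|_\gamma \leq \| u1_F \|_\gamma$ whenever $E\subset F$, and a \emph{monotone convergence property}, namely that $\| u1_{E_n} \|_\gamma \to \| u1_E \|_\gamma$ whenever $E_n\nearrow E$ or $E_n\searrow E$ and the limit operator is still $\gamma$-radonifying. These two facts do essentially all the work; the rest is covering geometry in $M^+$.

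For claim~(1), the inequality $\sup_K \| u1_K \|_{T^p(X)} \leq \| u \|_{T^p(X)}$ is immediate from the ideal property, since $\Aa(u1_K)(x) = \| u1_{K\cap\Gamma(x)} \|_\gamma \leq \Aa u(x)$ pointwise. For the reverse I would introduce the truncated cones $\Gamma(x;\varepsilon) = \{ (y,t)\in\Gamma(x) : \varepsilon < t < 1/\varepsilon \}$ and use monotone convergence in $\varepsilon$ to obtain $\| u \|_{T^p(X)}^p = \sup_{\varepsilon, B} \int_B \| u1_{\Gamma(x;\varepsilon)} \|_\gamma^p \D\mu(x)$, the supremum running also over balls $B$ exhausting $M$. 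The key geometric point is that, for $x$ ranging over a fixed $B$ and $\varepsilon$ fixed, every $\Gamma(x;\varepsilon)$ lies in one common compact set $K\subset M^+$, so each such integral is dominated by $\| u1_K \|_{T^p(X)}^p$. Claim~(2) is handled likewise: fixing an increasing sequence of compacts $K_n$ exhausting $M^+$, the functions $\Aa(u1_{K_n^c})(x) = \| u1_{K_n^c\cap\Gamma(x)} \|_\gamma$ decrease pointwise to zero, and dominated convergence with majorant $\Aa u\in L^p$ yields $\| u1_{K_n^c} \|_{T^p(X)}\to 0$.

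Claim~(3) splits into two independent inequalities. Writing $S(K) = \{ x\in M : \Gamma(x)\cap K\neq\emptyset \}$, a bounded and hence finite-measure subset of $M$, I note that $\Aa(u1_K)$ is supported in $S(K)$ and is pointwise at most $\| u1_K \|_\gamma$, so integration gives $\| u1_K \|_{T^p(X)} \leq \mu(S(K))^{1/p}\| u1_K \|_\gamma$. For the opposite direction I would cover $K$ by finitely many cylinders $B^+ = B\times(0,\infty)$, taking the radii small enough relative to the positive lower bound of the $t$-coordinate on the compact $K$ that $K\cap B^+\subset\Gamma_\alpha(x)$ for every $x\in B$; then $\| u1_{K\cap B^+} \|_\gamma \leq \Aa u(x)$ for such $x$, averaging over $B$ gives $\| u1_{K\cap B^+} \|_\gamma \leq \mu(B)^{-1/p}\| \Aa u \|_{L^p}$, and summing over the finitely many balls by the triangle inequality for $\| \cdot \|_\gamma$ produces the bound with $c_K = N(K)/\inf_B\mu(B)^{1/p}$.

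The step carrying the most weight is the monotone convergence property of the $\gamma$-norm under exhausting or shrinking the integration set, on which both claims~(1) and~(2) rest; I would be careful to state exactly which convergence result for $\gamma$-radonifying operators is invoked and to verify that the limit operators remain $\gamma$-radonifying, which here is automatic since they are indicator-restrictions of the given $u\in T^p(X)$, whose cone-restrictions are $\gamma$-radonifying by definition. Everything else reduces to the ideal property and elementary covering arguments in $M^+$, so no further genuine obstacle is anticipated.
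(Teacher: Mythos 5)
Your proposal is correct and follows essentially the same route as the paper's own proof: truncated cones $\Gamma(x;\varepsilon)$ with monotone convergence for claim (1), exhausting compacts for claim (2), and the $S(K)$ integration bound plus the finite covering of $K$ by small cylinders $B^+$ with $K\cap B^+\subset\Gamma(x)$ for claim (3), yielding the same constant $c_K = N(K)/\inf_B \mu(B)^{1/p}$. The only difference is cosmetic: you make explicit the ideal and convergence properties of the $\gamma$-norm (including that finite-rank approximability justifies the decreasing-set limit in claim (2)), which the paper uses silently.
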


  \begin{proof}[Proof of Proposition \ref{tentcompleteness}]
    Let $(u_k)$ be a Cauchy sequence in $T^p(X)$. For each compact $K\subset M^+$ we now see by (3) of Lemma 
    \ref{compactsubs} that $(u_k1_K)$ is a Cauchy sequence in $\gamma (L^2(M^+),X)$ and therefore converges to
    a $u^K$. Setting $u=u^K$ on each $L^2(K)$ results in a well-defined linear operator from $L^2_c(M^+)$ to $X$.
    
    To see that $u$ is in $T^p(X)$, fix a compact $K\subset M^+$ and observe that for each $k$,
    \begin{equation*}
      \| u1_K \|_{T^p(X)} \leq \| (u-u_k)1_K \|_{T^p(X)} + \| u_k1_K \|_{T^p(X)}
      \leq c_K \| (u-u_k)1_K \|_\gamma + \| u_k \|_{T^p(X)} .
    \end{equation*}
    Choosing $k$ large enough, we see that $\| u1_K \|_{T^p(X)} \lesssim 1$ independently of $K$, which means that
    $u\in T^p(X)$.
    
    In order to show that $u_k$ converges to $u$ in $T^p(X)$, let $\varepsilon > 0$.
    Choose then a number $N$ so that $\| u_k - u_N \|_{T^p(X)} < \varepsilon$ for all $k\geq N$ and, by (2) of 
    Lemma \ref{compactsubs}, a
    compact $K$ so that $\| (u-u_N)1_{K^c} \|_{T^p(X)} < \varepsilon$. Then for all $k\geq N$,
    \begin{align*}
      \| u - u_k \|_{T^p(X)} &\leq \| (u-u_k)1_K \|_{T^p(X)} + \| (u-u_N)1_{K^c} \|_{T^p(X)} + \| (u_k-u_N)1_{K^c} \|_{T^p(X)} \\
      &\leq c_K \| (u-u_k)1_K \|_\gamma + 2\varepsilon ,
    \end{align*}
    where the first term on the right tends to zero as $k\to\infty$.
    
    Finally, the density of $L^2_c(M^+) \otimes X$ in $T^p(X)$ follows 
    by approximating $u$ by $u1_K$ in $T^p(X)$ and then
    $u1_K$ by a finite rank operator $u'1_K$ in $\gamma (L^2(M^+),X)$.
  \end{proof}

\end{appendix}

\bibliographystyle{plain}
\def\cprime{$'$} \def\cprime{$'$}

\end{document}